\journalname{myjournal}
\numberwithin{equation}{section}
\newenvironment{customthm}[1]
  {\innercustomthm}
  {\endinnercustomthm}
\spnewtheorem{thm}{Theorem}[section]{\bfseries}{\itshape}
\spnewtheorem{cor}[thm]{Corollary}{\bfseries}{\itshape}
\spnewtheorem{lem}[thm]{Lemma}{\bfseries}{\itshape}
\spnewtheorem{ntn}{Notations}[section]{\bfseries}{\itshape}
\spnewtheorem{pro}{Proposition}[section]{\bfseries}{\itshape}
\spnewtheorem{dfn}{Definition}[section]{\bfseries}{\itshape}% maybe \upshape?
\spnewtheorem{as}{Assumption}[section]{\bfseries}{\itshape}
\spnewtheorem{rem}{Remark}[section]{\bfseries}{\itshape}
\spnewtheorem{ob}{Observation}[section]{\bfseries}{\itshape}
\begin{document}

\title{The Dual Minkowski Problem for Negative Indices%\thanks{Grants or other notes
%about the article that should go on the front page should be
%placed here. General acknowledgments should be placed at the end of the article.}
}

%\titlerunning{Short form of title}        % if too long for running head

\author{Yiming Zhao  
}

%\authorrunning{Short form of author list} % if too long for running head

\institute{Yiming Zhao \at
              Department of Mathematics, Tandon School of Engineering, New York University, NY, USA\\
              \email{yiming.zhao@nyu.edu}           %
}

\date{Received: date / Accepted: date}
% The correct dates will be entered by the editor

\maketitle

\begin{abstract}
Recently, the duals of Federer's curvature measures, called dual curvature measures, were discovered by Huang, Lutwak, Yang \& Zhang \cite{HLYZ}. In the same paper, they posed the dual Minkowski problem, the characterization problem for dual curvature measures, and proved existence results when the index, $q$, is in $(0,n)$. The dual Minkowski problem includes the Aleksandrov problem ($q=0$) and the logarithmic Minkowski problem ($q=n$) as special cases. In the current work, a complete solution to the dual Minkowski problem whenever $q<0$, including both existence and uniqueness, is presented.
\keywords{the dual Minkowski problem \and dual curvature measures \and Monge-Amp\`{e}re equation \and the dual Brunn-Minkowski theory}
% \PACS{PACS code1 \and PACS code2 \and more}
\subclass{52A40}
\end{abstract}

\section{Introduction}
\label{section introduction}
Quermassintegrals, which include volume and surface area as special cases, are among the most fundamental geometric invariants in the Brunn-Minkowski theory of convex bodies (compact, convex subsets of $\mathbb{R}^n$ with non-empty interiors). Denote by $\mathcal{K}_o^n$ the set of all convex bodies with the origin in their interiors. For $i=1,\cdots, n$, the $(n-i)$-th quermassintegral $W_{n-i}(K)$ of $K\in \mathcal{K}_o^n$ can be seen as the mean of the projection areas of $K$:
\begin{equation}
\label{eq intro quermassintegral}
W_{n-i}(K)=\frac{\omega_n}{\omega_i}\int_{G(n,i)}\mathcal{H}^i(K|\xi)d\xi,
\end{equation}
where the integration is with respect to the Haar measure on the Grassmannian $G(n,i)$ containing all $i$ dimensional subspaces of $\mathbb{R}^n$. Here $K|\xi$ is the image of the orthogonal projection of $K$ onto $\xi \in G(n,i)$, $\mathcal{H}^i$ is the $i$ dimensional Hausdorff measure and $\omega_j$ is the $j$ dimensional volume of the unit ball in $\mathbb{R}^j$ for each positive integer $j$. Three families of geometric measures can be viewed as differentials of quermassintegrals: area measures, (Federer's) curvature measures, and $L_p$ surface area measures. If one replaces the orthogonal projection in \eqref{eq intro quermassintegral} by intersection, the fundamental geometric functionals in the dual Brunn-Minkowski theory will appear. The $(n-i)$-th dual quermassintegral $\widetilde{W}_{n-i}(K)$ can be defined by
\begin{equation}
\label{eq intro dual quermassintegral}
\widetilde{W}_{n-i}(K)=\frac{\omega_n}{\omega_i}\int_{G(n,i)}\mathcal{H}^i(K\cap \xi)d\xi.
\end{equation}
Compare \eqref{eq intro quermassintegral} with \eqref{eq intro dual quermassintegral}. As opposed to quermassintegrals which capture boundary information about convex bodies, dual quermassintegrals, the fundamental geometric functionals in the dual Brunn-Minkowski theory, encode interior properties of convex bodies. Arising from dual quermassintegrals is a new family of geometric measures, dual curvature measures $\widetilde{C}_q(K,\cdot)$ for $q\in \mathbb{R}$, discovered by Huang, Lutwak, Yang \& Zhang (Huang-LYZ) in their groundbreaking work \cite{HLYZ}. This new family of measures miraculously connects well-known measures like Aleksandrov's integral curvature and the cone volume measure. 

Huang-LYZ \cite{HLYZ} asked for the conditions on a given Borel measure $\mu$ necessary and sufficient for it to be exactly the $q$-th dual curvature measure of a convex body. This problem is called \emph{the dual Minkowski problem}. The Aleksandrov problem and the logarithmic Minkowski problem are important special cases of the dual Minkowski problem. In the case when $\mu$ has $f$ as its density, solving the dual Minkowski problem is equivalent to solving the following Monge-Amp\`{e}re type equation on the unit sphere $S^{n-1}$:
\begin{equation*}
\frac{1}{n}h(v)|\nabla_{S^{n-1}}h(v)+h(v)v|^{q-n}\text{det}(h_{ij}(v)+h(v)\delta_{ij})=f(v),
\end{equation*} 
where $\nabla_{S^{n-1}}h$ is the gradient of $h$ on $S^{n-1}$, $h_{ij}$ is the Hessian of $h$ with respect to an orthonormal frame on $S^{n-1}$, and $\delta_{ij}$ is the Kronecker delta. Huang-LYZ \cite{HLYZ} considered the dual Minkowski problem for $q\in (0,n)$ when the given measure is even and gave a sufficient condition that ensures the existence of a solution. The uniqueness of the solution remains open (except when $q=0$). Very recently, for the critical integer cases $q=1,2,\cdots, n-1$, a better sufficient condition for the existence part of the dual Minkowski problem when the given measure is even was given by the author in \cite{YZ}. Independently and simultaneously, the same condition was presented by B\"{o}r\"{o}czky, Henk \& Pollehn in \cite{BH} and shown by them to be necessary. In the current work, we consider the dual Minkowski problem for the case when $q<0$. A complete solution in this case, including existence and uniqueness, will be presented. 

The family, $L_p$ surface area measures $S_p(K,\cdot)$ for $p\in \mathbb{R}$, introduced by Lutwak \cite{MR1231704,MR1378681}, is the family of fundamental geometric measures in the $L_p$ Brunn-Minkowski theory and has appeared in a growing number of works, see, for example, Haberl \cite{MR2966660}, Haberl \& Parapatits \cite{MR3194492,MR3176613}, Haberl \& Schuster \cite{MR2545028}, and LYZ \cite{MR1863023,MR2142136}. When $p=0$, $L_p$ surface area measure is best known as the cone volume measure and is being intensively studied, see, for example, Barthe, Gu\'{e}don, Mendelson \& Naor \cite{MR2123199}, B\"{o}r\"{o}czky-LYZ \cite{MR2964630,BLYZ,MR3316972}, B\"{o}r\"{o}czky \& Henk \cite{MR3415694}, Henk \& Linke \cite{MR3148545}, Ludwig \& Reitzner \cite{MR2680490}, Stancu \cite{MR1901250,MR2019226}, Zhu \cite{MR3228445}, and Zou \& Xiong \cite{MR3255458}. 

The characterization problem for $L_p$ surface area measure is called the $L_p$ Minkowski problem, which asks for necessary and sufficient condition(s) on the given measure so that it is exactly the $L_p$ surface area measure of a convex body. The solution to the $L_p$ Minkowski problem when $p>1$ was given by Chou \& Wang \cite{MR2254308}. See also Chen \cite{MR2204749}, Lutwak \cite{MR1231704}, LYZ \cite{MR2067123}, and Zhu \cite{MR3352764}. The solution to the $L_p$ Minkowski problem has proven to be essential in establishing different analytic affine isoperimetric inequalities (see, for example, Haberl \& Schuster \cite{MR2530600}, LYZ \cite{MR1987375}, Wang \cite{MR2927377}, and Zhang \cite{MR1776095}). Two major unsolved cases of the $L_p$ Minkowski problem are: when $p=-n$, the centro-affine Minkowski problem (see Zhu \cite{MR3356071}) and when $p=0$, the logarithmic Minkowski problem. For the logarithmic Minkowski problem, a necessary and sufficient condition has been given by B\"{o}r\"{o}czky-LYZ \cite{MR2964630} in the even case to ensure the existence of a solution. When the measure is arbitrary, different efforts have been made by B\"{o}r\"{o}czky, Heged\H{u}s \& Zhu \cite{Boroczky20062015}, Stancu \cite{MR1901250,MR2019226}, and Zhu \cite{MR3228445}. In both cases, the uniqueness part of the logarithmic Minkowski problem has proven to be extremely difficult and still remains open. See B{\"o}r{\"o}czky-LYZ \cite{MR2964630} and Stancu \cite{MR1901250,MR2019226} for some progress in the planar case. The logarithmic Minkowski problem has strong connections with whether a measure has an affine isotropic image (B{\"o}r{\"o}czky-LYZ \cite{MR3316972}) and curvature flows (Andrews \cite{MR1714339,MR1949167}).

The first curvature measure $C_0(K,\cdot)$ is also known as Aleksandrov's integral curvature. Its characterization problem is called the Aleksandrov problem whose solution was given by Aleksandrov using a topological argument, namely, his Mapping Lemma \cite{MR0007625}. A very interesting new approach to solving the Aleksandrov problem was presented by Oliker \cite{MR2332603}. Aleksandrov's integral curvature (the Aleksandrov problem resp.) and the cone volume measure (the logarithmic Minkowski problem resp.) were never thought to be connected until the recent remarkable work \cite{HLYZ} by Huang-LYZ. They discovered a family of geometric measures, dual curvature measures $\widetilde{C}_q(K,\cdot)$ for $q\in \mathbb{R}$, that can be viewed as differentials of dual quermassintegrals. Readers are recommended to see Section 3 in \cite{HLYZ} for the detailed construction of dual curvature measures. The family of dual curvature measures serves as a bridge linking Aleksandrov's integral curvature measure and the cone volume measure. When $q=0$, $0$-th dual curvature measure is (up to a constant) the same as Aleksandrov's integral curvature for the polar body. When $q=n$, $n$-th dual curvature measure is (up to a constant) equal to the cone volume measure. Huang-LYZ posed the characterization problem for dual curvature measures:

\textbf{The dual Minkowski problem} \cite{HLYZ}:\textit{
Given a finite Borel measure $\mu$ on $S^{n-1}$ and $q\in \mathbb{R}$, find the necessary and sufficient condition(s) on $\mu$ so that there exists a convex body $K$ containing the origin in its interior and $\mu(\cdot) = \widetilde{C}_q(K,\cdot)$. 
}

The dual Minkowski problem contains two special cases: when $q=0$, it becomes the Aleksandrov problem; when $q=n$, it becomes the logarithmic Minkowski problem.

In this paper, we will consider the dual Minkowski problem for the case $q<0$. A complete solution, including the existence and the uniqueness part, will be presented. Note that this works not only in the even case, but in the case when the given measure is arbitrary as well. In particular, the main theorems in this paper are:

\begin{customthm}{(Existence part of the dual Minkowski problem for negative indices)}
Suppose $q<0$ and $\mu$ is a non-zero finite Borel measure on $S^{n-1}$. There exists a convex body $K\subset \mathbb{R}^n$ that contains the origin in its interior, such that $\mu(\cdot)=\widetilde{C}_q(K,\cdot)$ if and only if $\mu$ is not concentrated in any closed hemisphere.
\end{customthm}
\begin{customthm}{(Uniqueness part of the dual Minkowski problem for negative indices)}
Suppose $q<0$ and $K,L$ are two convex bodies that contain the origin in their interiors. If $\widetilde{C}_q(K,\cdot) = \widetilde{C}_q(L,\cdot)$, then $K=L$.
\end{customthm}

Dual curvature measures and dual Minkowski problem are concepts belonging to the dual Brunn-Minkowski theory initiated by Lutwak (see Schneider \cite{schneider2014}). The theory started out by replacing support functions by radial functions, and mixed volumes by dual mixed volumes. The dual Brunn-Minkowski theory has been most effective in dealing with questions related to intersections, while the Brunn-Minkowski theory has been most helpful in answering questions related to projections. One of the major triumphs of the dual Brunn-Minkowski theory is tackling the famous Busemann-Petty problem, see Gardner \cite{MR1298719}, Gardner, Koldobsky \& Schlumprecht \cite{MR1689343}, Lutwak \cite{MR963487}, and Zhang \cite{MR1689339}. Although the duality demonstrated in the dual Brunn-Minkowski theory is only heuristic, it has provided numerous strikingly similar (formally), yet significant concepts and results, see, for example, Gardner \cite{MR2353261}, Gardner, Hug \& Weil \cite{MR3120744}, Haberl \cite{MR2397461}, Haberl \& Ludwig \cite{MR2250020}, and Zhang \cite{MR1443203}. Also see Gardner \cite{MR2251886} and Schneider \cite{schneider2014} for a detailed account. 

Recall that dual quermassintegrals are the means of the intersection areas of convex bodies (see \eqref{eq intro dual quermassintegral}). Define the normalized $(n-i)$-th dual quermassintegral $\bar{W}_{n-i}(K)$ of $K\in \mathcal{K}_o^n$ to be 
\begin{equation*}
\bar{W}_{n-i}(K)=\left(\frac{1}{\omega_n}\widetilde{W}_{n-i}(K)\right)^\frac{1}{i}.
\end{equation*}
Both dual quermassintegrals and normalized dual quermassintegrals can be naturally extended to real indices, see \eqref{eq dual quermassintegral}, \eqref{eq normalized dual quermassintegral 1}, and \eqref{eq normalized dual quermassintegral 2}. For each $q\in \mathbb{R}$, the $q$-th dual curvature measure, denoted by $\widetilde{C}_q(K,\cdot)$, of a convex body $K$ containing the origin in its interior, may be defined to be the unique Borel measure on $S^{n-1}$ such that
\begin{equation}
\label{eq intro variational formula}
\left.\frac{d}{dt}\log \bar{W}_{n-q}([K,f]_t)\right|_{t=0}=\frac{1}{\widetilde{W}_{n-q}(K)}\int_{S^{n-1}}f(v)d\widetilde{C}_q(K,v),
\end{equation}
holds for each continuous $f:S^{n-1}\rightarrow \mathbb{R}$. Here $[K,f]_t$ is the \emph{logarithmic Wulff shape} generated by $K$ and $f$, i.e.,
\begin{equation*}
[K,f]_t=\{x\in \mathbb{R}^n:x\cdot v\leq h_K(v)e^{tf(v)},\forall v\in S^{n-1}\}.
\end{equation*}
Of critical importance is the fact that dual curvature measures are valuations, i.e., $$\widetilde{C}_q(K,\cdot)+\widetilde{C}_q(L,\cdot)= \widetilde{C}_q(K\cup L,\cdot)+\widetilde{C}_q(K\cap L,\cdot),$$ for each $K,L\in \mathcal{K}_o^n$ such that $K\cup L\in \mathcal{K}_o^n$. See, e.g., Haberl \cite{MR2966660}, Haberl \& Ludwig \cite{MR2250020} ,Haberl \& Parapatits \cite{MR3194492,MR3176613}, Ludwig \cite{MR2159706,MR2772547}, Ludwig \& Reitzner \cite{MR2680490}, Schuster \cite{MR2435426,MR2668553}, Schuster \& Wannerer \cite{MR2846354} and the references therein for important valuations and their characterizations in the theory of convex bodies.

The current paper aims to give a complete solution, including existence and uniqueness, to the dual Minkowski problem for negative indices.

\section{Preliminaries}
\label{section preliminaries}
\subsection{Basics regarding convex bodies}
Books such as \cite{MR2251886} and \cite{schneider2014} often serve as good references for the theory of convex bodies.

We will mainly be working in $\mathbb{R}^n$ equipped with the usual Euclidean norm $|\cdot|$. The standard inner product will be written as $x\cdot y$ for vectors $x,y\in \mathbb{R}^n$. The standard $n$-dimensional unit ball will be denoted by $B_n$ and its volume by $\omega_n$. Write $S^{n-1}$ for the boundary of $B_n$ and recall that its surface area is $n\omega_n$. We will use $C(S^{n-1})$ to denote the space of continuous functions on $S^{n-1}$ with the usual max norm; i.e., $||f||=\max\{|f(u)|:u\in S^{n-1}\}$. We will also write $C^+(S^{n-1})$ for the set of positive continuous functions on $S^{n-1}$. For a given measure $\mu$, we will use $|\mu|$ for its total measure.

A subset $K$ of $\mathbb{R}^n$ is called a \emph{convex body} if it is a compact convex set with non-empty interior. The set of all convex bodies that contain the origin in the interior is denoted by $\mathcal{K}_o^n$. The boundary of $K$ will be denoted by $\partial K$. 

Associated to each convex body $K\in \mathcal{K}_o^n$ is the support function $h_K:S^{n-1}\rightarrow \mathbb{R}$ given by
\begin{equation*}
h_K(v)=\max \{v\cdot x: x\in K\},
\end{equation*}
for each $v\in S^{n-1}$. It is easy to see that $h_K$ is a continuous function and $h_K>0$. Hence, the support function $h_K$ is bounded away from $0$.

Another function that can be associated to a convex body $K\in \mathcal{K}_o^n$ is the radial function $\rho_K$. Define $\rho_K:S^{n-1}\rightarrow \mathbb{R}$ by 
\begin{equation*}
\rho_K(u)=\max\{\lambda>0:\lambda u\in K\},
\end{equation*}
for each $u\in S^{n-1}$. Again, it can be seen that $\rho_K$ is a continuous function and $\rho_K>0$. Hence, the radial function $\rho_K$ is bounded away from $0$.

The set $\mathcal{K}_o^n$ can be endowed with two metrics: the Hausdorff metric is such that the distance between $K,L\in \mathcal{K}_o^n$ is $||h_K-h_L||$; the radial metric is such that the distance between $K,L\in \mathcal{K}_o^n$ is $||\rho_K-\rho_L||$. Note that the two metrics are equivalent, i.e., if $K\in \mathcal{K}_o^n$ and $K_1,\cdots,K_n,\cdots\in \mathcal{K}_o^n$, then
\begin{equation*}
h_{K_i} \rightarrow h_K \text{ uniformly} \qquad\text{ if and only if } \qquad\rho_{K_i} \rightarrow \rho_K \text{ uniformly.}
\end{equation*}
Thus, we may write $K_i\rightarrow K$ without specifying which metric is in use.

For each $K\in \mathcal{K}_o^n$, we can define the polar body $K^*$ by
\begin{equation*}
K^*=\{x\in \mathbb{R}^n:x\cdot y\leq 1, \text{ for all }y\in K\}.
\end{equation*}
Note that $K^*\in \mathcal{K}_o^n$ and by its definition, we have
\begin{equation*}
\rho_K = 1/h_{K^*} \qquad\text{ and } \qquad h_K=1/\rho_{K^*}.
\end{equation*}
We will also be using the fact that if $K\in \mathcal{K}_o^n$ and $K_1,\cdots, K_n,\cdots\in \mathcal{K}_o^n$, then 
\begin{equation}
\label{eq polar convergence}
K_i\rightarrow K \qquad\text{ if and only if } \qquad K_i^*\rightarrow K^*.
\end{equation}

For each $f\in C^+(S^{n-1})$, define $[f] \in \mathcal{K}_o^n$ to be the Wulff shape generated by $f$, i.e., 
\begin{equation*}
[f]= \{x\in \mathbb{R}^n: x\cdot v \leq f(v) \text{ for all } v\in S^{n-1}\}.
\end{equation*}
Obviously, one has 
\begin{equation}
\label{radial of convex hull}
h_{[f]}\leq f,
\end{equation}
and 
\begin{equation}
\label{radial of convex hull of radial}
[ h_K] = K,
\end{equation} 
for each $K\in \mathcal{K}_o^n$. 
Suppose $K\in \mathcal{K}_o^n$ and $g\in C(S^{n-1})$. When $\delta>0$ is small enough, we can define $h_t\in C^+(S^{n-1})$ by
\begin{equation}
\label{logarithmic convex hull}
\log h_t(v)=\log h_K(v)+tg(v),
\end{equation}
for each $v\in S^{n-1}$ and $t\in (-\delta,\delta)$. We will usually write $[K,g,t]$ for the convex body $[h_t]$.

Let us assume $K\in \mathcal{K}_o^n$. The \emph{supporting hyperplane} $P(K,v)$ of $K$ for each $v\in S^{n-1}$ is given by
\begin{equation*}
P(K,v)=\{x\in \mathbb{R}^n:x\cdot v = h_K(v)\}.
\end{equation*}
At each boundary point $x\in \partial K$, a unit vector $v$ is said to be an \emph{outer unit normal} of $K$ at $x\in \partial K$ if $P(K,v)$ passes through $x$. 

For a subset $\omega\subset S^{n-1}$, the \emph{radial Gauss image}, $\vec{\alpha}_K(\omega)$, of $K$ at $\omega$, is the set of all outer unit normals of $K$ at points in $\{\rho_K(u)u:u\in \omega\}$. When $\omega=\{u\}$, we usually write $\vec{\alpha}_K(u)$ instead of $\vec{\alpha}_K(\{u\})$. Let $\omega_K\subset S^{n-1}$ be the set consisting of all $u\in S^{n-1}$ such that the set $\vec{\alpha}_K(u)$ contains more than one single element. It can be shown that the set $\omega_K$ is of spherical Lebesgue measure $0$ (see Theorem 2.2.5 in \cite{schneider2014}). The \emph{radial Gauss map}, $\alpha_K:S^{n-1}\setminus \omega_K\rightarrow S^{n-1}$ is the map that takes each $u\in S^{n-1}\setminus \omega_K$ to the unique element in $\vec{\alpha}_K(u)$. 

Similarly, for each $\eta\subset S^{n-1}$, the \emph{reverse radial Gauss image}, $\vec{\alpha}^*_K(\eta)$, of $K$ at $\eta$, is the set of all radial directions $u\in S^{n-1}$ such that the boundary point $\rho_K(u)u$ has at least one element in $\eta$ as its outer unit normal, i.e., $\vec{\alpha}^*_K(\eta)=\{u\in S^{n-1}:\vec{\alpha}_K(u)\cap\eta \neq \emptyset\}$. When $\eta = \{v\}$, we usually write $\vec{\alpha}_K^*(v)$ instead of $\vec{\alpha}_K^*(\{v\})$. Let $\eta_K\subset S^{n-1}$ be the set consisting of all $v\in S^{n-1}$ such that the set $\vec{\alpha}_K^*(v)$ contains more than one single element. The set $\eta_K$ is of spherical Lebesgue measure $0$ (see Theorem 2.2.11 in \cite{schneider2014}). The \emph{reverse radial Gauss map}, $\alpha_K^*:S^{n-1}\setminus \eta_K \rightarrow S^{n-1}$, is the map that takes each $v\in S^{n-1}\setminus \eta_K$ to the unique element in $\vec{\alpha}_K^*(v)$. 

A detailed description of the radial Gauss map and the reverse radial Gauss map, together with a list of their properties, can be found in Section 2.2 \cite{HLYZ}.

While central to the Brunn-Minkowski theory are quermassintegrals, the geometric functionals central to the dual Brunn-Minkowski theory are \emph{dual quermassintegrals}. Let $q\in \mathbb{R}$ and $K\in \mathcal{K}_o^n$, the $(n-q)$-th dual quermassintegral $\widetilde{W}_{n-q}(K)$ may be defined by 
\begin{equation}
\label{eq dual quermassintegral}
\widetilde{W}_{n-q}(K)= \frac{1}{n}\int_{S^{n-1}}\rho_K^q(u)du.
\end{equation}
For real $q\neq 0$, the \emph{normalized dual quermassintegral} $\bar{W}_{n-q}(K)$ is given by 
\begin{equation}
\label{eq normalized dual quermassintegral 1}
\bar{W}_{n-q}(K)=\left(\frac{1}{n\omega_n}\int_{S^{n-1}}\rho_K^q(u)du\right)^{\frac{1}{q}},
\end{equation}
and for $q=0$, by 
\begin{equation}
\label{eq normalized dual quermassintegral 2}
\bar{W}_n(K)= \exp\left(\frac{1}{n\omega_n}\int_{S^{n-1}}\log \rho_K(u)du\right).
\end{equation}
We will write 
\begin{equation*}
\widetilde{V}_q(K)= \widetilde{W}_{n-q}(K)\qquad \text{ and }\qquad \bar{V}_q(K) = \bar{W}_{n-q}(K).
\end{equation*}
The functionals $\widetilde{V}_q$ and $\bar{V}_q$ are called the \emph{$q$-th dual volume} and the \emph{normalized $q$-th dual volume}, respectively.

\subsection{Dual curvature measures and the dual Minkowski problem}
\label{sec dual curvature measures}
For quick later references, we gather here some facts about dual curvature measures and the dual Minkowski problem.

While curvature measures can be viewed as differentials of quermassintegrals, they can also be defined by considering \emph{local parallel sets} (see the construction in Chapter 4 \cite{schneider2014}). In the spirit of \emph{conceptual duality}, Huang-LYZ \cite{HLYZ} were able to discover dual curvature measures $\widetilde{C}_q(K,\cdot)$ for $q\in \mathbb{R}$ using what they call \emph{local dual parallel bodies}. See Section 3 in \cite{HLYZ} for a detailed construction of dual curvature measures. This new family of geometric measures can also be viewed as differentials of dual quermassintegrals (see \eqref{eq intro variational formula}) and thus warrant being called ``dual'' curvature measures. Dual curvature measures have the following integral representation. For each Borel set $\eta\subset S^{n-1}$ and $q\in \mathbb{R}$, the dual curvature measure $\widetilde{C}_q(K,\cdot)$ of $K\in\mathcal{K}_o^n$ can be defined by
\begin{equation}
\label{eq curvature measure integral representation}
\widetilde{C}_q(K,\eta)=\frac{1}{n}\int_{\vec{\alpha}_K^*(\eta)}\rho_K^q(u)du.
\end{equation}
The total measure of $\widetilde{C}_q(K,\cdot)$ is equal to the $(n-q)$-th dual quermassintegral, i.e.,
\begin{equation}
\widetilde{W}_{n-q}(K)=\widetilde{C}_q(K,S^{n-1}).
\end{equation}
It is not hard to see that $\widetilde{C}_q$ is homogeneous of degree $q$. That is 
\begin{equation*}
\widetilde{C}_q(\lambda K,\cdot)=\lambda^q\widetilde{C}_q(K,\cdot),
\end{equation*}
for each $\lambda>0$. From the integral representation \eqref{eq curvature measure integral representation} and Lemma 2.5 in \cite{HLYZ}, it is not hard to see that the $0$-th dual curvature measure is (up to a constant) equal to the Aleksandrov's integral curvature for the polar body and the $n$-th curvature measure is (up to a constant) equal to the cone volume measure. 

When $K$ has smooth boundary with everywhere positive Gauss curvature, the dual curvature measure $\widetilde{C}_q(K,\cdot)$ of $K$ is absolutely continuous with respect to the spherical Lebesgue measure and the density is given (in terms of the support function of $K$) by
\begin{equation*}
d\widetilde{C}_q(K,v)=\frac{1}{n}h_K(v)|\nabla h_K(v)|^{q-n}\det(h_{ij}(v)+h_K(v)\delta_{ij})dv,
\end{equation*}
where $(h_{ij})$ is the Hessian matrix of $h_K$ on $S^{n-1}$ with respect to an orthonormal basis. When $K$ is a polytope with outer unit normal $\{v_1,\cdots, v_m\}$, the dual curvature measure $\widetilde{C}_{q}(K,\cdot)$ is a discrete measure concentrated on $\{v_1,\cdots,v_m\}$ and is given by
\begin{equation*}
\widetilde{C}_q(K,\cdot)= \sum_{i=1}^m c_i\delta_{v_i},
\end{equation*}
where $\delta_{v_i}$ is the Dirac measure concentrated at $v_i$ and
\begin{equation*}
c_i = \frac{1}{n} \int_{\vec{\alpha}_K^*(v_i)}\rho_K^q(u)du.
\end{equation*}
See \cite{HLYZ} for details.

It is natural to look for requirements on a given measure so that it becomes the $q$-th dual curvature measure of a convex body $K$ for a given $q\in \mathbb{R}$. The characterization problem for dual curvature measures, called the dual Minkowski problem, was posed in \cite{HLYZ}:

\textbf{The dual Minkowski problem:}\textit{
Given a finite Borel measure $\mu$ on $S^{n-1}$ and $q\in \mathbb{R}$, find the necessary and sufficient condition(s) on $\mu$ so that it becomes the $q$-th dual curvature measure of a convex body $K\in \mathcal{K}_o^n$. 
}

When $q=0$, the dual Minkowski problem is the same as the Aleksandrov problem. Both the existence and the uniqueness of the solution to Aleksandrov problem were given by Aleksandrov \cite{MR0007625}. See also Oliker \cite{MR2332603} for an intriguingly new approach and its connection to optimal transport. When $q=n$, the dual Minkowski problem becomes the logarithmic Minkowski problem whose complete solution still remains open. When restricted to the even case, the existence part of the logarithmic Minkowski problem was established by B{\"o}r{\"o}czky-LYZ \cite{BLYZ}. For non-even cases, Stancu \cite{MR1901250,MR2019226} studied the problem in the planar case. Zhu \cite{MR3228445} gave a sufficient condition when the given measure is discrete, but not necessarily even. B\"{o}r\"{o}czky, Heged\H{u}s, \& Zhu \cite{Boroczky20062015} later found a sufficient condition for the discrete case, which includes \cite{BLYZ} (in the discrete case) and \cite{MR3228445} as special cases. See also Chou \& Wang \cite{MR2254308} for the case when the given measure has a positive density. The uniqueness part of the logarithmic Minkowski problem, in general, remains open (see B{\"o}r{\"o}czky-LYZ \cite{MR2964630} and Stancu \cite{MR1901250,MR2019226} for results in the planar case).

When $0<q<n$, the dual Minkowski problem was considered in Huang-LYZ \cite{HLYZ}. Restricting to the class of even measures and origin-symmetric convex bodies, they found a sufficient condition that would guarantee the existence of a solution to the dual Minkowski problem. 

\begin{thm}[\cite{HLYZ}]
Suppose $\mu$ is a non-zero finite even Borel measure on $S^{n-1}$ and $q\in (0,n]$. If the measure $\mu$ satisfies: 
\begin{enumerate}[(1)]
\item when $q\in [1,n]$,
\begin{equation}
\label{eq tmp 0000}
\frac{\mu(S^{n-1}\cap \xi_{n-i})}{|\mu|}<1-\frac{i(q-1)}{(n-1)q},
\end{equation}
for all $i<n$ and all $(n-i)$ dimensional subspaces $\xi_{n-i}\subset \mathbb{R}^n$;
\item when $q\in (0,1)$,
\begin{equation*}
\frac{\mu(S^{n-1}\cap \xi_{n-1})}{|\mu|}<1,
\end{equation*}
for all $(n-1)$ dimensional subspaces $\xi\subset\mathbb{R}^n$,
\end{enumerate}
then there exists an origin-symmetric convex body $K$ in $\mathbb{R}^n$ such that 
\begin{equation*}
\widetilde{C}_q(K,\cdot)=\mu(\cdot).
\end{equation*}
\end{thm}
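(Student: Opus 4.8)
The plan is a variational one, parallel to the B\"{o}r\"{o}czky--LYZ treatment of the logarithmic Minkowski problem (the endpoint $q=n$ of the present statement): we realize $\widetilde{C}_q(K,\cdot)=\mu$ as the Euler--Lagrange equation of a constrained extremal problem for convex bodies, solve that problem, and strip off an overall positive constant using the $q$-homogeneity of $\widetilde{C}_q$. Hypothesis \eqref{eq tmp 0000} (and, when $q\in(0,1)$, the hyperplane hypothesis) is exactly the quantitative input that keeps the extremal body from collapsing to lower dimension.

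\emph{The functional.} For even $f\in C^+(S^{n-1})$ set
\begin{equation*}
\mathcal{J}(f)=\int_{S^{n-1}}\log f\,d\mu-\frac{|\mu|}{q}\log\widetilde{V}_q([f]).
\end{equation*}
Since $[f]\in\mathcal{K}_o^n$ and, by \eqref{radial of convex hull} and \eqref{radial of convex hull of radial}, replacing $f$ by $h_{[f]}\le f$ leaves $[f]$ (hence the second term) untouched while not increasing the first term, the infimum of $\mathcal{J}$ over even $f\in C^+(S^{n-1})$ equals its infimum over $\{h_K:K\in\mathcal{K}_o^n\text{ origin-symmetric}\}$; and because $\mathcal{J}(h_{\lambda K})=\mathcal{J}(h_K)$ for all $\lambda>0$, we may restrict attention to $K\subseteq B_n$ with $\max_{S^{n-1}}h_K=1$.

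\emph{A non-degenerate minimizer exists.} On the family of origin-symmetric $K\subseteq B_n$ with inradius bounded below, $\mathcal{J}(h_K)$ is continuous (there $h_{K_j}\to h_{K_0}$ and $\rho_{K_j}\to\rho_{K_0}$ uniformly, both limits bounded away from $0$), and an origin-symmetric compact convex set with nonempty interior automatically contains $0$ in its interior; hence any non-degenerate limit point of a minimizing sequence is the sought minimizer. Suppose no such limit point exists: a minimizing sequence $K_j\subseteq B_n$ then subconverges, by Blaschke selection, to a lower-dimensional $K_0$; after a rotation $K_0\subseteq\xi_{n-i}$ for some $(n-i)$-dimensional subspace with $1\le i\le n-1$, so $\rho_{K_j}^q\to 0$ off $\xi_{n-i}$ and $\widetilde{V}_q(K_j)\to 0$. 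The crucial estimate is then to balance the two blow-ups in $\mathcal{J}(h_{K_j})=\int\log h_{K_j}\,d\mu-\tfrac{|\mu|}{q}\log\widetilde{V}_q(K_j)$: sandwiching $K_j$ between coaxial ellipsoids with ordered semi-axes $1=\delta_1^{(j)}\ge\cdots\ge\delta_n^{(j)}$, a direct computation from \eqref{eq dual quermassintegral} shows $-\log\widetilde{V}_q(K_j)$ grows at a rate governed by the dimensions of the collapsing coordinate blocks, each contribution capped at $q$, while $-\int\log h_{K_j}\,d\mu$ grows, to leading order in the $-\log\delta_k^{(j)}$, with coefficients the $\mu$-masses $\mu(S^{n-1}\cap\xi_{n-k}^{(j)})$ of the associated flag $\xi_{n-1}^{(j)}\supseteq\xi_{n-2}^{(j)}\supseteq\cdots$ of coordinate subspaces. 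Comparing these term by term, hypothesis \eqref{eq tmp 0000} makes the $-\log\widetilde{V}_q$ term strictly dominate, so $\mathcal{J}(h_{K_j})\to+\infty$, contradicting minimality; when $q\in(0,1)$ every capped exponent equals $1$ and the role of \eqref{eq tmp 0000} is played by the hyperplane hypothesis, which already yields $\mu(L)<|\mu|$ for every proper subspace $L$. I expect this a priori estimate to be the main obstacle: making it rigorous for a general flattening sequence, not just the ellipsoidal model, calls for an induction along the flag of subspaces as in B\"{o}r\"{o}czky--LYZ, and it is precisely here that the exact value of the bound $1-\frac{i(q-1)}{(n-1)q}$ is used.

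\emph{Euler--Lagrange and conclusion.} Let $K_0$ be the minimizer, so that by scale invariance $\mathcal{J}(h_{K_0})\le\mathcal{J}(h_K)$ for every origin-symmetric $K\in\mathcal{K}_o^n$. Fix even $g\in C(S^{n-1})$ and, for small $t$, let $K_t=[h_{K_0}e^{tg}]\in\mathcal{K}_o^n$ be the associated logarithmic Wulff shape; since $h_{K_t}\le h_{K_0}e^{tg}$ and $[h_{K_t}]=K_t$,
\begin{equation*}
\mathcal{J}(h_{K_0})\le\mathcal{J}(h_{K_t})\le\mathcal{J}(h_{K_0})+t\int_{S^{n-1}}g\,d\mu-\frac{|\mu|}{q}\log\frac{\widetilde{V}_q(K_t)}{\widetilde{V}_q(K_0)},
\end{equation*}
the first inequality being minimality. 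Write $\Psi(t)$ for the difference between the last expression and $\mathcal{J}(h_{K_0})$. By \eqref{eq dual quermassintegral}, \eqref{eq normalized dual quermassintegral 1} and the variational formula \eqref{eq intro variational formula}, $\Psi$ is differentiable at $0$ with $\Psi'(0)=\int_{S^{n-1}}g\,d\mu-\tfrac{|\mu|}{\widetilde{V}_q(K_0)}\int_{S^{n-1}}g\,d\widetilde{C}_q(K_0,\cdot)$; since $\Psi(t)\ge 0=\Psi(0)$ for all small $t$, necessarily $\Psi'(0)=0$, i.e.
\begin{equation*}
\int_{S^{n-1}}g\,d\mu=\frac{|\mu|}{\widetilde{V}_q(K_0)}\int_{S^{n-1}}g\,d\widetilde{C}_q(K_0,\cdot)
\end{equation*}
for every even $g\in C(S^{n-1})$. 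As both $\mu$ and $\widetilde{C}_q(K_0,\cdot)$ (the latter because $K_0$ is origin-symmetric) are even, this yields $\mu=c\,\widetilde{C}_q(K_0,\cdot)$ as measures, with $c=|\mu|/\widetilde{V}_q(K_0)>0$. Finally, by the $q$-homogeneity of $\widetilde{C}_q$, the origin-symmetric body $K:=c^{1/q}K_0\in\mathcal{K}_o^n$ satisfies $\widetilde{C}_q(K,\cdot)=c\,\widetilde{C}_q(K_0,\cdot)=\mu$, as required.
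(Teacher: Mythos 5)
The paper itself contains no proof of this statement: it is quoted as background from \cite{HLYZ}, and the only portion of your argument with a counterpart in the present paper is the Euler--Lagrange step, which is carried out (for general, not necessarily even, measures and in maximization form) in Section \ref{section optimization problem} using the variational formula \eqref{variational formula}. That part of your proposal is sound: minimizing your $\mathcal{J}$ is the same as maximizing the paper's $\Phi$ up to an additive constant, the reduction to support functions via \eqref{radial of convex hull} and \eqref{radial of convex hull of radial} is correct, the differentiation of your $\Psi$ at $t=0$ is exactly what \eqref{variational formula} provides, testing against even $g$ does determine an even measure, and the final rescaling $K=c^{1/q}K_0$ is legitimate by the $q$-homogeneity of $\widetilde{C}_q$ --- provided a minimizer in $\mathcal{K}_o^n$ exists.

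The genuine gap is precisely where you flag it: the proof that a minimizing sequence cannot collapse to a lower-dimensional set. For $q\in(0,n]$ both terms of $\mathcal{J}(h_{K_j})$ diverge along a degenerating sequence, and the theorem holds only because hypothesis \eqref{eq tmp 0000} forces the $-\frac{|\mu|}{q}\log\widetilde{V}_q$ divergence to dominate; this quantitative comparison is the entire mathematical content of the result, and your treatment of it is an ellipsoidal heuristic plus an acknowledgement that an induction along a flag of subspaces ``as in B\"{o}r\"{o}czky--LYZ'' would be needed. As written it does not go through: the flag $\xi^{(j)}_{n-1}\supset\xi^{(j)}_{n-2}\supset\cdots$ depends on $j$, so its $\mu$-masses must be controlled under limits of subspaces; the semi-axes $\delta_k^{(j)}$ collapse at a priori incomparable rates, so a term-by-term comparison along a single flag with each dual-volume contribution ``capped at $q$'' is not a proof and does not visibly produce the sharp threshold $1-\frac{i(q-1)}{(n-1)q}$; and $q\in(0,1)$ requires its own estimate. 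In \cite{HLYZ} this a priori estimate occupies the bulk of the existence argument and is established through a carefully constructed decomposition of $S^{n-1}$ adapted to the degenerating bodies, with the two ranges of $q$ handled separately. Until you supply that estimate (or some substitute compactness/barrier argument), your proposal proves only the conditional statement that a non-degenerate minimizer solves the problem, not the theorem itself.
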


In the following, the dual Minkowski problem when $q<0$ will be considered.

\section{The Optimization Problem}
\label{section optimization problem}
The first step towards solving various kinds of Minkowski problems using variational method is to properly convert the original problem to an optimization problem whose Euler-Lagrange equation would imply that the given measure is equal to the geometric measure (under investigation) of an optimizer. The associated optimization problem for the dual Minkowski problem was asked in \cite{HLYZ}. It was also established that for even measures, a solution to the optimization problem will lead to a solution to the dual Minkowski problem. Note that the proof works essentially in the same way even if the measure is not even. For the sake of completeness, we shall first describe the optimization problem and then give a short account of how a solution to the optimization problem would lead to a solution to the dual Minkowski problem.

Suppose $\mu$ is a non-zero finite Borel measure. Since we are dealing with the dual Minkowski problem for negative indices, we may restrict our attention to $0\neq q \in \mathbb{R}$. Define $\Phi:C^+(S^{n-1})\rightarrow \mathbb{R}$ by letting
\begin{equation}
\label{definition of Phi}
\Phi(h)=-\frac{1}{|\mu|}\int_{S^{n-1}}\log h(v)d\mu(v)+\log \bar{V}_q([h]),
\end{equation}
for every $h\in C^+(S^{n-1})$. Note that the functional $\Phi$ is homogeneous of degree $0$; i.e., 
\begin{equation}
\label{eq_homogeneity of Phi}
\Phi(ch)=\Phi(h),
\end{equation}
for all $c>0$.

When $q=n$, the functional $\Phi$ becomes
\begin{equation*}
\Phi(h)= - \frac{1}{|\mu|}\int_{S^{n-1}}\log h(v)d\mu(v)+\frac{1}{n}\log(\text{vol}([h])/\omega_n)
\end{equation*}
and is a key component in solving the even logarithmic Minkowski problem in B{\"o}r{\"o}czky-LYZ \cite{BLYZ}.

When $q=0$, the functional $\Phi$ becomes 
\begin{equation*}
\Phi(h) = -\frac{1}{|\mu|}\int_{S^{n-1}}\log h(v)d\mu (v)+\frac{1}{n\omega_n}\int_{S^{n-1}}\log \rho_{[h]}(u)du
\end{equation*}
and in a slightly different form was studied in Oliker \cite{MR2332603}.

\textbf{The optimization problem (I):}
\begin{equation*}
\sup\{\Phi(h):\widetilde{V}_q([h])=|\mu|,h\in C^+(S^{n-1})\}.
\end{equation*}

Note that for each $h\in C^+(S^{n-1})$, by \eqref{radial of convex hull} and \eqref{radial of convex hull of radial}, 
\begin{equation*}
\Phi(h)\leq \Phi(h_{[h]}) \text{ and } \widetilde{V}_q([h]) = \widetilde{V}_q([h_{[h]}]).
\end{equation*}
Thus, we may restrict our attention in the search of a maximizer to the set of all support functions. That is, $h_{Q_0}$ is a maximizer to the optimization problem (I) if and only if $Q_0$ is a maximizer to the following optimization problem:

\textbf{The optimization problem (II):}
\begin{equation*}
\sup\{\Phi_\mu(K):\widetilde{V}_q(K)=|\mu|,K\in \mathcal{K}_o^n\},
\end{equation*}
where $\Phi_\mu:\mathcal{K}_o^n\rightarrow \mathbb{R}$ is defined by letting 
\begin{equation*}
\Phi_\mu(K)=-\frac{1}{|\mu|}\int_{S^{n-1}}\log h_K(v)d\mu(v)+\log\bar{V}_q(K),
\end{equation*}
for each $K\in \mathcal{K}_o^n$. Note that on $\mathcal{K}_o^n$, the functional $\Phi_\mu$ is continuous with respect to the Hausdorff metric.

One potential difficulty in obtaining the Euler-Lagrange equation for the optimization problem (I) or (II) is that taking the differential of $\Phi$ or $\Phi_\mu$, more specifically the functional $\log \bar{V}_q([h])$, can be hard. The following variational formula was established in \cite{HLYZ} (see Theorem 4.5):
\begin{equation}
\label{variational formula}
\left.\frac{d}{dt} \log \bar{V}_q([K,g,t])\right|_{t=0}=\frac{1}{\widetilde{V}_q(K)}\int_{S^{n-1}}g(v)d\widetilde{C}_q(K,v),
\end{equation}
where $[K,g,t]$ is defined in \eqref{logarithmic convex hull} and $g$ is an arbitrary continuous function on $S^{n-1}$. With the help of \eqref{variational formula}, we may now show how a maximizer to the optimization (I), or equivalently (II), will lead to a solution to the dual Minkowski problem.

Suppose $Q_0$ is a maximizer to (II), or equivalently $h_{Q_0}$ is a maximizer to (I); i.e., $\widetilde{V}_q(Q_0)=|\mu|$ and 
\begin{equation*}
\Phi(h_{Q_0})=\sup\{\Phi(h):\widetilde{V}_q([h])=|\mu|,h\in C^+(S^{n-1})\}.
\end{equation*}
By \eqref{eq_homogeneity of Phi}, 
\begin{equation}
\label{eq_tmp_optimization problem 1}
\Phi(h_{Q_0})\geq \Phi(h),
\end{equation}
for each $h\in C^+(S^{n-1})$. Let $g:S^{n-1}\rightarrow \mathbb{R}$ be an arbitrary continuous function. For $\delta>0$ small enough and $t\in (-\delta,\delta)$, define $h_t:S^{n-1}\rightarrow \mathbb{R}$ by 
\begin{equation*}
h_t = h_{Q_0}e^{tg}.
\end{equation*}

By \eqref{eq_tmp_optimization problem 1}, \eqref{definition of Phi}, \eqref{variational formula}, and the fact that $\widetilde{V}_q(Q_0)=|\mu|$,
\begin{equation*}
\begin{aligned}
0 &= \left.\frac{d}{dt}\Phi(h_t)\right|_{t=0}\\
  &= \left.\frac{d}{dt}\left(-\frac{1}{|\mu|}\int_{S^{n-1}}\log h_{Q_0}(v)+tg(v)d\mu(v)+\log \bar{V}_q([h_t])\right)\right|_{t=0}\\
  &=-\frac{1}{|\mu|}\int_{S^{n-1}}g(v)d\mu(v)+\frac{1}{\widetilde{V}_q(Q_0)}\int_{S^{n-1}}g(v)d\widetilde{C}_q(Q_0,v)\\
  &=\frac{1}{|\mu|}\left(-\int_{S^{n-1}}g(v)d\mu(v)+\int_{S^{n-1}}g(v)d\widetilde{C}_q(Q_0,v)\right).
\end{aligned}
\end{equation*}
Since this holds for any arbitrary function $g\in C(S^{n-1})$, we have 
\begin{equation*}
\mu(\cdot) = \widetilde{C}_q(Q_0,\cdot).
\end{equation*}

Thus, we have
\begin{lem}
\label{lemma optimization problem}
Suppose $q<0$ and $\mu$ is a non-zero finite Borel measure. Assume $Q_0\in \mathcal{K}_o^n$. If $\widetilde{V}_q(Q_0)=|\mu|$ and 
\begin{equation*}
\Phi_\mu(Q_0)=\sup\{\Phi_\mu(K):\widetilde{V}_q(K)=|\mu|,K\in \mathcal{K}_o^n\},
\end{equation*}
then
\begin{equation*}
\mu(\cdot)=\widetilde{C}_q(Q_0,\cdot).
\end{equation*}
\end{lem}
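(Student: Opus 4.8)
The plan is to exploit the $0$-homogeneity of $\Phi$ so that the constrained maximizer $Q_0$ is in fact an \emph{unconstrained} maximizer of $\Phi$ over the whole cone $C^+(S^{n-1})$; then the Euler--Lagrange equation drops out of a free logarithmic perturbation together with the variational formula \eqref{variational formula}, with no Lagrange multiplier required.

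First I would translate the hypothesis into a statement about support functions: by the equivalence of the optimization problems (II) and (I) recorded above (which uses \eqref{radial of convex hull} and \eqref{radial of convex hull of radial}), the function $h_{Q_0}$ maximizes $\Phi$ among all $h\in C^+(S^{n-1})$ with $\widetilde{V}_q([h])=|\mu|$. Next, given an arbitrary $h\in C^+(S^{n-1})$, I would rescale it: since $q\neq 0$ and $\widetilde{V}_q([h])>0$, the number $\lambda=\bigl(|\mu|/\widetilde{V}_q([h])\bigr)^{1/q}$ is well defined and positive, and because $\widetilde{V}_q$ is homogeneous of degree $q$ while $[\lambda h]=\lambda[h]$, the rescaled function $\lambda h$ lies in the constraint set. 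By \eqref{eq_homogeneity of Phi}, $\Phi(h)=\Phi(\lambda h)\le \Phi(h_{Q_0})$. Hence $\Phi(h_{Q_0})\ge\Phi(h)$ for \emph{every} $h\in C^+(S^{n-1})$, which is \eqref{eq_tmp_optimization problem 1}.

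Then I would fix an arbitrary $g\in C(S^{n-1})$ and form the curve $h_t=h_{Q_0}e^{tg}\in C^+(S^{n-1})$, so that $[h_t]=[Q_0,g,t]$ in the notation of \eqref{logarithmic convex hull}. The real-valued function $t\mapsto\Phi(h_t)$ attains its maximum at $t=0$, so its derivative there vanishes. Differentiating the two terms of $\Phi(h_t)$ in \eqref{definition of Phi}: the first term is affine in $t$ and contributes $-\frac1{|\mu|}\int_{S^{n-1}}g\,d\mu$, while the derivative of the second term $\log\bar V_q([Q_0,g,t])$ at $t=0$ equals $\frac1{\widetilde V_q(Q_0)}\int_{S^{n-1}}g\,d\widetilde C_q(Q_0,\cdot)$ by \eqref{variational formula}. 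Setting the sum to zero and invoking the constraint $\widetilde V_q(Q_0)=|\mu|$ yields $\int_{S^{n-1}}g\,d\mu=\int_{S^{n-1}}g\,d\widetilde C_q(Q_0,\cdot)$ for all $g\in C(S^{n-1})$, hence $\mu=\widetilde C_q(Q_0,\cdot)$ by the Riesz representation theorem.

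The only genuinely substantive step is the homogeneity reduction in the second paragraph that disposes of the volume constraint; once that is in place, the argument is essentially an application of the quoted formula \eqref{variational formula}, the lone routine point being the interchange of differentiation and the $\mu$-integral in the first term, which is immediate since $g$ is bounded on the compact sphere and the integrand depends linearly, hence smoothly, on $t$.
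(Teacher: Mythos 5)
Your proposal is correct and follows essentially the same route as the paper: reduce via the $0$-homogeneity of $\Phi$ to an unconstrained maximization over $C^+(S^{n-1})$, then differentiate along the logarithmic perturbation $h_t=h_{Q_0}e^{tg}$ using the variational formula \eqref{variational formula} and let $g$ range over $C(S^{n-1})$. Your explicit rescaling $\lambda=\bigl(|\mu|/\widetilde{V}_q([h])\bigr)^{1/q}$ just spells out the homogeneity step that the paper invokes tersely via \eqref{eq_homogeneity of Phi}.
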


Note that the above lemma works for other $q$'s as well. But since only the dual Minkowski problem for negative indices is considered here, we choose to state the lemma only for $q<0$.

\section{Solving the Optimization Problem}
This section is dedicated to showing that the optimization problem (II) has a maximizer when the given measure $\mu$ is not concentrated in any closed hemisphere. This, together with Lemma \ref{lemma optimization problem}, immediately implies that the dual Minkowski problem when $q<0$ has a solution.

The following lemma gives an upper bound for the polar of convex bodies that have fixed $q$-th dual volume.
\begin{lem}
\label{lemma boundedness}
Suppose $q<0$ and $c>0$. Assume $K\in \mathcal{K}_o^n$. If 
\begin{equation*}
\widetilde{V}_q(K)=\frac{1}{n}\int_{S^{n-1}}\rho_K^{q}(u)du = c,
\end{equation*}
then there exists $M=M(c)>0$ such that
\begin{equation*}
K^*\subset M B_n.
\end{equation*}
\end{lem}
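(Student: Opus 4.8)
The plan is to translate the claimed inclusion into a pointwise upper bound for the support function of the polar body, and then to exploit convexity: a single large value of $h_{K^*}$ forces $h_{K^*}$ to be comparably large on a spherical cap of a \emph{fixed} angular size, which would make $\int_{S^{n-1}}\rho_K^q$ too large once $q<0$.

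First I would observe that $K^*\subset MB_n$ is equivalent to $h_{K^*}(v)\le M$ for every $v\in S^{n-1}$, since $MB_n=\bigcap_{v\in S^{n-1}}\{x\in\R^n:x\cdot v\le M\}$. Using the polarity relation $h_{K^*}=1/\rho_K$ recorded in Section~\ref{section preliminaries} and writing $p:=-q>0$, the hypothesis becomes
\[
\int_{S^{n-1}}h_{K^*}(v)^{p}\,dv=\int_{S^{n-1}}\rho_K(v)^{q}\,dv=nc,
\]
so it suffices to bound $h_{K^*}$ pointwise in terms of this integral. Fix $v_0\in S^{n-1}$, put $L:=h_{K^*}(v_0)$, and choose $x_0\in K^*$ with $x_0\cdot v_0=L$ (possible since $K^*$ is compact); note $|x_0|\ge x_0\cdot v_0=L$. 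Because $h_{K^*}(v)\ge x_0\cdot v$ for all $v\in S^{n-1}$, on the cap $\Omega:=\{v\in S^{n-1}:x_0\cdot v\ge L/2\}$ we have $h_{K^*}(v)\ge L/2$. Since $|x_0|\ge L$, the cap $\Omega$ contains $\{v\in S^{n-1}:(x_0/|x_0|)\cdot v\ge 1/2\}$, a cap of angular radius $\pi/3$ whose spherical Lebesgue measure is a constant $\kappa_n>0$ depending only on $n$. Hence $nc\ge\int_\Omega h_{K^*}^p\ge\kappa_n(L/2)^p$, which gives $L\le 2(nc/\kappa_n)^{1/p}$. As $v_0$ was arbitrary and the right-hand side depends only on $c$ (with $n$ and $q$ fixed), I would set $M:=2(nc/\kappa_n)^{1/p}$, concluding $h_{K^*}\le M$ and therefore $K^*\subset MB_n$.

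I expect the only genuine point to be the last geometric step — ensuring the cap on which $h_{K^*}$ is large has measure bounded below uniformly over all admissible $K$. The inequality $|x_0|\ge L$ is precisely what makes this uniform: normalizing $x_0$ can only enlarge $\Omega$ relative to the fixed reference cap $\{v:(x_0/|x_0|)\cdot v\ge 1/2\}$. Everything else is routine manipulation of the polarity relations together with the monotonicity of $t\mapsto t^p$ for $p>0$; in particular, only the single value $\widetilde V_q(K)=c$ is used, and no lower bound on $\rho_K$ (equivalently, no control on the size of $K$ itself) is needed.
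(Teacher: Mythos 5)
Your proof is correct and is essentially the paper's argument: both exploit that a point of $K^*$ far from the origin forces $h_{K^*}(v)=1/\rho_K(v)$ to be large on a spherical region of measure bounded below independently of $K$, which is incompatible with the fixed value of $\frac1n\int_{S^{n-1}}h_{K^*}^{-q}\,du=c$ since $-q>0$. The only cosmetic difference is that the paper keeps the weighted bound $h_{K^*}(u)\ge (u\cdot v_0)_+\,\rho_{K^*}(v_0)$ and integrates $(u\cdot v_0)_+^{-q}$ over the whole sphere, while you truncate to a fixed $60^\circ$ cap where $h_{K^*}\ge L/2$, yielding a slightly cruder but equally valid constant.
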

\begin{proof}
First we note that by the rotational invariance of the spherical Lebesgue measure, the integral 
\begin{equation*}
\int_{S^{n-1}}(u\cdot v)_+^{-q}du
\end{equation*}
is independent of the choice of $v\in S^{n-1}$. Here $(u\cdot v)_+=\max\{u\cdot v,0\}$. Since the spherical Lebesgue measure is not concentrated in any closed hemisphere,
\begin{equation}
\label{value of an integral}
m_0 := \int_{S^{n-1}}(u\cdot v)_+^{-q}du >0.
\end{equation}

Let $v_0\in S^{n-1}$ be such that 
\begin{equation*}
\rho_{K^*}(v_0)= \max_{v\in S^{n-1}}\rho_{K^*}(v).
\end{equation*}
By definition of the support function and the fact that $K\in \mathcal{K}_o^n$, 
\begin{equation*}
h_{K^*}(u)\geq (u\cdot v_0)_+\rho_{K^*}(v_0).
\end{equation*}
This, the fact that $q<0$, and \eqref{value of an integral} imply
\begin{equation*}
\begin{aligned}
c = \widetilde{V}_q(K)&=\frac{1}{n}\int_{S^{n-1}}\rho_K^{q}(u)du\\
					&=\frac{1}{n}\int_{S^{n-1}}h_{K^*}^{-q}(u)du\\
				   &\geq \frac{1}{n}\int_{S^{n-1}}(u\cdot v_0)_+^{-q}\rho_{K^*}^{-q}(v_0)du\\
				   &=\frac{1}{n} m_0\rho_{K^*}^{-q}(v_0).
\end{aligned}
\end{equation*}
This implies that 
\begin{equation*}
\rho_{K^*}(v_0)\leq \left(\frac{nc}{m_0}\right)^{-\frac{1}{q}}.
\end{equation*}
By the choice of $v_0$, we may choose $M=\left(\frac{nc}{m_0}\right)^{-\frac{1}{q}}$ and thus
\begin{equation*}
K^*\subset MB_n.
\end{equation*}
\qed
\end{proof}

The next lemma will solve the optimization problem (II).
\begin{lem}
\label{existence of maximizer}
Suppose $q<0$ and $\mu$ is a non-zero finite Borel measure. If $\mu$ is not concentrated in any closed hemisphere, then there exists $Q_0\in \mathcal{K}_o^n$ with $\widetilde{V}_q(Q_0) = |\mu|$ and 
\begin{equation*}
\Phi_\mu(Q_0)=\sup\{\Phi_\mu(K):\widetilde{V}_q(K)=|\mu| \text{ and } K\in \mathcal{K}_o^n\}.
\end{equation*}
\end{lem}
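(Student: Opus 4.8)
The plan is to take a maximizing sequence for the optimization problem~(II) and extract a convergent subsequence whose limit is the desired $Q_0$, using Lemma~\ref{lemma boundedness} and the hypothesis that $\mu$ is not concentrated in any closed hemisphere to prevent the sequence from escaping to infinity, and then invoking the continuity of $\widetilde{V}_q$ and $\Phi_\mu$.

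I would begin with two reductions. First, the constraint set $\{K\in\mathcal{K}_o^n:\widetilde{V}_q(K)=|\mu|\}$ is nonempty, since the ball $rB_n$ with $r=(|\mu|/\omega_n)^{1/q}$ lies in it; and on this set $\bar{V}_q(K)=(\widetilde{V}_q(K)/\omega_n)^{1/q}=(|\mu|/\omega_n)^{1/q}$ is constant, so that $\Phi_\mu(K)=-\tfrac1{|\mu|}\int_{S^{n-1}}\log h_K\,d\mu+\tfrac1q\log(|\mu|/\omega_n)$ there. Second, Lemma~\ref{lemma boundedness} with $c=|\mu|$ supplies a single $M=M(|\mu|)\ge 1$ (enlarge if necessary) such that $\tfrac1M B_n\subset K$, hence $h_K\ge 1/M$, for every $K$ in the constraint set. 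Consequently $\int_{S^{n-1}}\log h_K\,d\mu\ge -|\mu|\log M$, so $\Phi_\mu$ is bounded above on the constraint set and the supremum $S$ is a finite real number.

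Let $\{K_j\}$ be a maximizing sequence, so $\widetilde{V}_q(K_j)=|\mu|$ and $\Phi_\mu(K_j)\to S$. The crucial step is to show that $\{K_j\}$ is uniformly bounded. Suppose not; after passing to a subsequence we may assume $R_j:=\max_{u\in S^{n-1}}\rho_{K_j}(u)=\rho_{K_j}(u_j)\to\infty$ and $u_j\to u_0\in S^{n-1}$. Because $\mu$ is not concentrated in the closed hemisphere $\{v:v\cdot(-u_0)\ge 0\}$, we have $\mu(\{v:v\cdot u_0>0\})>0$, so there exist $\varepsilon_0>0$ and $c_0>0$ with $\mu(\{v:v\cdot u_0\ge 2\varepsilon_0\})\ge c_0$; since $u_j\to u_0$, for all large $j$ the cap $A_j:=\{v:v\cdot u_j\ge\varepsilon_0\}$ contains $\{v:v\cdot u_0\ge 2\varepsilon_0\}$ and therefore $\mu(A_j)\ge c_0$. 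As $R_j u_j\in K_j$, we get $h_{K_j}(v)\ge R_j(v\cdot u_j)\ge R_j\varepsilon_0$ on $A_j$, while $h_{K_j}\ge 1/M$ on $S^{n-1}\setminus A_j$; splitting the integral over $A_j$ and its complement gives $\int_{S^{n-1}}\log h_{K_j}\,d\mu\ge c_0\log(R_j\varepsilon_0)-|\mu|\log M\to\infty$, hence $\Phi_\mu(K_j)\to-\infty$, contradicting $\Phi_\mu(K_j)\to S$. Therefore $\{K_j\}\subset RB_n$ for some $R>0$.

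Since $\tfrac1M B_n\subset K_j\subset RB_n$ for all $j$, the Blaschke selection theorem yields a subsequence of $\{K_j\}$ converging in the Hausdorff metric to a compact convex set $Q_0$ with $\tfrac1M B_n\subset Q_0$, so $Q_0\in\mathcal{K}_o^n$. By the continuity of $\widetilde{V}_q$ and of $\Phi_\mu$ on $\mathcal{K}_o^n$ with respect to the Hausdorff metric, $\widetilde{V}_q(Q_0)=|\mu|$ and $\Phi_\mu(Q_0)=S$, as required. I expect the boundedness of the maximizing sequence to be the main obstacle, and the one point where the full hypothesis on $\mu$ genuinely enters: securing a spherical cap around the escaping direction $u_0$ that carries a fixed positive amount of $\mu$-mass.
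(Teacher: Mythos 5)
Your proposal is correct and follows essentially the same strategy as the paper: a maximizing sequence, Lemma \ref{lemma boundedness} for the bound $K^*\subset MB_n$ (equivalently $h_K\ge 1/M$), a contradiction via a spherical cap of positive $\mu$-mass around the escaping direction (this is where non-concentration enters), then Blaschke selection and continuity of $\widetilde{V}_q$ and $\Phi_\mu$. The only cosmetic difference is that you argue in the primal, proving boundedness of $\{K_j\}$ directly before extracting a limit, whereas the paper applies Blaschke selection to the polars $Q_i^*$ and rules out $o\in\partial K_0$; via $\rho_{Q_i^*}=1/h_{Q_i}$ the two estimates are identical.
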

\begin{proof}
Suppose $\{Q_i\}\subset \mathcal{K}_o^n$ is a maximizing sequence; i.e., $\widetilde{V}_q(Q_i)= |\mu|$ and
\begin{equation}
\label{Q_i is a maximizing sequence}
\lim_{i\rightarrow \infty}\Phi_\mu(Q_i)=\sup\{\Phi_\mu(K):\widetilde{V}_q(K)=|\mu| \text{ and }K\in \mathcal{K}_o^n\}.
\end{equation}
By Lemma \ref{lemma boundedness}, there exists $M=M(|\mu|)>0$ such that 
\begin{equation}
\label{bound for Q_i}
Q_i^*\subset MB_n.
\end{equation}
By Blaschke's selection theorem, we may assume (by taking subsequence) that there exists a compact convex set $K_0\subset \mathbb{R}^n$ such that 
\begin{equation*}
Q_i^*\rightarrow K_0.
\end{equation*} 

We note that if $o\in \text{int }K_0$, then we are done. Indeed, we may take $Q_0=K_0^*$. To see why this works, we may use the continuity of $\Phi_\mu$ and $\widetilde{V}_q$, and \eqref{eq polar convergence} to conclude that
\begin{equation*}
\widetilde{V}_q(Q_0)= \widetilde{V}_q(K_0^*)=\lim_{i\rightarrow \infty}\widetilde{V}_q(Q_i)=|\mu|,
\end{equation*}
and
\begin{equation*}
\Phi_\mu(Q_0)=\Phi_\mu(K_0^*)=\lim_{i\rightarrow \infty}\Phi_\mu(Q_i)=\sup \{\Phi_\mu(K):\widetilde{V}_q(K)=|\mu| \text{ and }K\in \mathcal{K}_o^n\}.
\end{equation*}

Let us now show that $o\in \text{int } K_0$. Assume otherwise, i.e., $o\in \partial K_0$. Hence there exists $u_0\in S^{n-1}$ such that $h_{K_0}(u_0)=0$. Since $Q_i^*$ converges to $K_0$ in Hausdorff metric, we have 
\begin{equation}
\label{support funtion goes to 0}
\lim_{i\rightarrow \infty} h_{Q_i^*}(u_0)= h_{K_0}(u_0)= 0.
\end{equation}
For $0<\delta<1$, define
\begin{equation*}
\omega_\delta(u_0) = \{v\in S^{n-1}:v\cdot u_0>\delta\}.
\end{equation*}
For each $v\in \omega_\delta(u_0)$,
\begin{equation*}
h_{Q_i^*}(u_0)\geq (v\cdot u_0)\rho_{Q_i^*}(v)> \delta \rho_{Q_i^*}(v),
\end{equation*}
which implies
\begin{equation*}
\rho_{Q_i^*}(v)< h_{Q_i^*}(u_0)/\delta.
\end{equation*}
This, together with \eqref{support funtion goes to 0}, shows that $\rho_{Q_i^*}$ converges to $0$ uniformly on $\omega_\delta(u_0)$.

By monotone convergence theorem and the fact that $\mu $ is a finite measure that is not concentrated in any closed hemisphere, we have 
\begin{equation*}
\lim_{\delta\rightarrow 0} \mu\left(\omega_\delta(u_0)\right)= \mu\left(\{v\in S^{n-1}:v\cdot u_0>0\}\right)>0.
\end{equation*}
This implies the existence of $\delta_0>0$ such that
\begin{equation}
\label{positive measure}
\mu\left(\omega_{\delta_0}(u_0)\right)>0.
\end{equation}
Hence, by \eqref{bound for Q_i}, \eqref{positive measure}, and the fact that $\rho_{Q_i^*}$ converges to $0$ uniformly on $\omega_{\delta_0}(u_0)$,
\begin{equation*}
\begin{aligned}
&\Phi_\mu(Q_i) \\=&-\frac{1}{|\mu|}\int_{\omega_{\delta_0}(u_0)}\log h_{Q_i}(v)d\mu(v)-\frac{1}{|\mu|}\int_{S^{n-1}\setminus \omega_{\delta_0}(u_0)}\log h_{Q_i}(v)d\mu(v)+\frac{1}{q}\log \frac{|\mu|}{\omega_n}\\
=&\frac{1}{|\mu|}\int_{\omega_{\delta_0}(u_0)}\log \rho_{Q_i^*}(v)d\mu(v)+\frac{1}{|\mu|}\int_{S^{n-1}\setminus \omega_{\delta_0}(u_0)}\log \rho_{Q_i^*}(v)d\mu(v)+\frac{1}{q}\log \frac{|\mu|}{\omega_n}\\
\leq & \frac{1}{|\mu|}\int_{\omega_{\delta_0}(u_0)}\log \rho_{Q_i^*}(v)d\mu(v)+\frac{1}{|\mu|}\mu(S^{n-1}\setminus \omega_{\delta_0}(u_0))\log M +\frac{1}{q}\log \frac{|\mu|}{\omega_n}\\
\rightarrow& -\infty,
\end{aligned}
\end{equation*}
as $i\rightarrow \infty$. This is clearly a contradiction to $\{Q_i\}$ being a maximizing sequence.\qed
\end{proof}

Lemmas \ref{lemma optimization problem} and \ref{existence of maximizer} immediately give the following theorem.
\begin{thm}
Suppose $q<0$ and $\mu$ is a finite non-zero Borel measure. There exists $K\in \mathcal{K}_o^n$ such that $\widetilde{C}_q(K,\cdot)=\mu(\cdot)$ if and only if $\mu$ is not concentrated in any closed hemisphere.
\end{thm}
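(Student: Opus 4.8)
The statement to prove is the existence theorem: for $q<0$ and a non-zero finite Borel measure $\mu$ on $S^{n-1}$, there exists $K\in\mathcal{K}_o^n$ with $\widetilde{C}_q(K,\cdot)=\mu$ if and only if $\mu$ is not concentrated in any closed hemisphere. The forward direction (necessity) is the easy part: if $\mu=\widetilde{C}_q(K,\cdot)$ were concentrated in a closed hemisphere $\{v:v\cdot u_0\le 0\}$, then by the integral representation \eqref{eq curvature measure integral representation} the set $\vec{\alpha}_K^*(\{v:v\cdot u_0>0\})$ would have $\rho_K^q$-measure zero, hence Lebesgue measure zero; but this reverse radial Gauss image contains (up to a null set) all radial directions $u$ with $u\cdot u_0>0$ — geometrically, points on the "near side" of $\partial K$ in the direction $u_0$ have outer normals with positive inner product with $u_0$ — which is a set of positive measure. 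I would spell this out using the properties of $\vec{\alpha}_K^*$ from Section 2.2 of \cite{HLYZ}, or alternatively observe that $\widetilde{C}_q(K,\cdot)$ concentrated in a closed hemisphere forces $K$ to be degenerate.

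The reverse direction (sufficiency) is where the work lies, and here the plan is essentially to invoke the machinery already assembled in the excerpt. By Lemma \ref{existence of maximizer}, the hypothesis that $\mu$ is not concentrated in any closed hemisphere guarantees the existence of a maximizer $Q_0\in\mathcal{K}_o^n$ to the optimization problem (II) with $\widetilde{V}_q(Q_0)=|\mu|$. Then Lemma \ref{lemma optimization problem} — whose proof via the variational formula \eqref{variational formula} is already carried out in the text — yields immediately that $\mu(\cdot)=\widetilde{C}_q(Q_0,\cdot)$. So taking $K=Q_0$ finishes the sufficiency direction. In other words, the theorem is a direct corollary of the two lemmas, and the proof is just the sentence "combine Lemma \ref{lemma optimization problem} and Lemma \ref{existence of maximizer}" together with the short necessity argument.

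The one genuine subtlety, and the step I would be most careful about, is the necessity direction, because it requires understanding precisely which radial directions land in $\vec{\alpha}_K^*(\eta)$ for an open hemisphere $\eta$. The clean way: suppose $\mu=\widetilde{C}_q(K,\cdot)$ is concentrated in the closed hemisphere $H^-=\{v\in S^{n-1}:v\cdot u_0\le 0\}$. Let $\eta=\{v:v\cdot u_0>0\}$, so $\mu(\eta)=0$, i.e. $\widetilde{C}_q(K,\eta)=\frac{1}{n}\int_{\vec{\alpha}_K^*(\eta)}\rho_K^q(u)\,du=0$, forcing $\vec{\alpha}_K^*(\eta)$ to be Lebesgue-null. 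Now consider any $u\in S^{n-1}$ with $u\cdot u_0>0$ at which $\partial K$ is differentiable (all but a null set of such $u$); the outer unit normal $v=\alpha_K(u)$ at the boundary point $x=\rho_K(u)u$ satisfies $x\cdot v=h_K(v)>0$, and since $x$ is a positive multiple of $u$ this gives $u\cdot v>0$, but we also need $v\cdot u_0>0$. This last inequality need not hold pointwise, so the argument must instead use that $K$ has nonempty interior: pick the point of $\partial K$ farthest in the $u_0$ direction — its normal cone contains vectors with positive $u_0$-component, and the radial directions of nearby boundary points fill out a set of positive measure inside $\vec{\alpha}_K^*(\eta)$, a contradiction. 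I would state this carefully, perhaps reducing to the observation that $\vec{\alpha}_K^*(\eta)\supseteq\{u:u\cdot u_0>0\}$ up to a null set when $o\in\operatorname{int}K$, which is exactly the kind of fact recorded in \cite{HLYZ}; the bulk of the theorem, by contrast, is already done.

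\begin{proof}
We first prove necessity. Suppose there exists $K\in\mathcal{K}_o^n$ with $\mu(\cdot)=\widetilde{C}_q(K,\cdot)$, and assume for contradiction that $\mu$ is concentrated in some closed hemisphere, say $\mu$ is supported in $H^-=\{v\in S^{n-1}:v\cdot u_0\le 0\}$ for some $u_0\in S^{n-1}$. Let $\eta=\{v\in S^{n-1}:v\cdot u_0>0\}$, so that $\mu(\eta)=0$. By \eqref{eq curvature measure integral representation},
\begin{equation*}
0=\mu(\eta)=\widetilde{C}_q(K,\eta)=\frac{1}{n}\int_{\vec{\alpha}_K^*(\eta)}\rho_K^q(u)\,du.
\end{equation*}
Since $\rho_K^q>0$ on $S^{n-1}$, the set $\vec{\alpha}_K^*(\eta)$ has spherical Lebesgue measure $0$. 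On the other hand, consider $u\in S^{n-1}$ with $u\cdot u_0>0$ and $u\notin\omega_K$ (recall $\omega_K$ is null). The boundary point $x=\rho_K(u)u$ has unique outer unit normal $v=\alpha_K(u)$, and $x\cdot v=h_K(v)>0$ since $o\in\operatorname{int}K$. We claim that for all such $u$ in a set of positive measure, $v\cdot u_0>0$, so that $u\in\vec{\alpha}_K^*(\eta)$, contradicting the nullity above. Indeed, let $x_0\in\partial K$ attain $\max_{x\in K}x\cdot u_0$; then $x_0\cdot u_0=h_K(u_0)>0$ and $u_0$ is an outer unit normal at $x_0$. Writing $x_0=\rho_K(u_1)u_1$ with $u_1=x_0/|x_0|$, we have $u_1\cdot u_0>0$, and by continuity every boundary point $\rho_K(u)u$ with $u$ in a neighborhood $U$ of $u_1$ on $S^{n-1}$ has some outer unit normal $v$ with $v\cdot u_0>0$ (the normal cones vary upper semicontinuously, and at $u_1$ the normal cone contains $u_0$ which has $u_0\cdot u_0=1>0$). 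Thus $U\subset\vec{\alpha}_K^*(\eta)$ up to the null set $\omega_K$, and $U$ has positive measure — a contradiction. Hence $\mu$ is not concentrated in any closed hemisphere.

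We now prove sufficiency. Suppose $\mu$ is a non-zero finite Borel measure on $S^{n-1}$ that is not concentrated in any closed hemisphere. By Lemma \ref{existence of maximizer}, there exists $Q_0\in\mathcal{K}_o^n$ with $\widetilde{V}_q(Q_0)=|\mu|$ and
\begin{equation*}
\Phi_\mu(Q_0)=\sup\{\Phi_\mu(K):\widetilde{V}_q(K)=|\mu|\text{ and }K\in\mathcal{K}_o^n\}.
\end{equation*}
By Lemma \ref{lemma optimization problem}, this implies $\mu(\cdot)=\widetilde{C}_q(Q_0,\cdot)$. Taking $K=Q_0$ completes the proof.
\qed
\end{proof}
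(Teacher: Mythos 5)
The sufficiency half of your proof is exactly the paper's: you combine Lemma \ref{existence of maximizer} with Lemma \ref{lemma optimization problem}, which is all the paper does (it treats the necessity as obvious). The gap is in your necessity argument. The claim that ``every boundary point $\rho_K(u)u$ with $u$ in a neighborhood $U$ of $u_1$ has some outer unit normal $v$ with $v\cdot u_0>0$'', justified by upper semicontinuity of normal cones, fails on both counts. Upper semicontinuity (closedness of the normal bundle) only says that limits of normals at nearby boundary points are normals at the limit point; to go from ``the normal cone at $x_0$ contains $u_0$'' to ``nearby boundary points have a normal in the open half-space $\{v\cdot u_0>0\}$'' you would need lower semicontinuity of the normal cone map, which is false. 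Moreover the asserted inclusion $U\subset\vec{\alpha}_K^*(\eta)$ (even modulo null sets) is genuinely wrong: in $\mathbb{R}^2$ take $K=\mathrm{conv}\bigl(\{(10,1)\}\cup\{x:|x|\le \tfrac12\}\bigr)$ and $u_0=(0,1)$. Then $x_0=(10,1)$ is the unique maximizer of $x\cdot u_0$, but the boundary points on the lower tangent segment emanating from $x_0$ have radial directions filling up one side of every neighborhood of $u_1=x_0/|x_0|$, and their unique outer normal has strictly negative $u_0$-component; so a positive-measure portion of every neighborhood of $u_1$ lies outside $\vec{\alpha}_K^*(\eta)$. (The conclusion you want is still true; it is your route to it, via a full radial neighborhood of the farthest point, that breaks.)

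The repair is short if you center the cap at $u_0$ rather than at $u_1$ and use the bounds $rB_n\subset K\subset RB_n$, valid for some $0<r<R$ since $K\in\mathcal{K}_o^n$. For any $u\notin\omega_K$, with $v=\alpha_K(u)$, the supporting hyperplane $P(K,v)$ passes through $\rho_K(u)u$, so $\rho_K(u)\,u\cdot v=h_K(v)\ge r$ while $\rho_K(u)\le R$; hence $u\cdot v\ge r/R>0$, i.e.\ the spherical distance from $u$ to $v$ is at most $\arccos(r/R)<\pi/2$. Consequently every $u\notin\omega_K$ whose spherical distance to $u_0$ is less than $\pi/2-\arccos(r/R)$ satisfies $v\cdot u_0>0$, so this nonempty open cap is contained in $\vec{\alpha}_K^*(\eta)$ up to the null set $\omega_K$, and since $\rho_K^q\ge R^q>0$ (recall $q<0$), the representation \eqref{eq curvature measure integral representation} gives $\widetilde{C}_q(K,\eta)>0$, the desired contradiction. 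With this substitution (or by simply citing the corresponding property of reverse radial Gauss images from Section 2.2 of \cite{HLYZ}), your proof is complete and otherwise coincides with the paper's.
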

\begin{proof}
The only if part is obvious, while the if part follows from Lemmas \ref{lemma optimization problem} and \ref{existence of maximizer}.\qed
\end{proof}

\section{Uniqueness}
For the dual Minskowski problem when $q<0$, not only does a solution exist, the uniqueness of the solution can be established as well. The primary goal of this section is to establish this fact. 

The following lemma is needed.

\begin{lem}
\label{lemma uniqueness}
Suppose $Q_1,Q_2\in \mathcal{K}_o^n$. If the following sets
\begin{equation*}
\eta_1=\{v\in S^{n-1}:h_{Q_1}(v)>h_{Q_2}(v)\},
\end{equation*}
\begin{equation*}
\eta_2=\{v\in S^{n-1}:h_{Q_1}(v)<h_{Q_2}(v)\},
\end{equation*}
\begin{equation*}
\eta_0=\{v\in S^{n-1}:h_{Q_1}(v)=h_{Q_2}(v)\},
\end{equation*}
are non-empty, then the following statements are true:
\begin{enumerate}[(a)]
\item \label{(1)}If $u\in \vec{\alpha}_{Q_1}^*(\eta_1)$, then $\rho_{Q_1}(u)>\rho_{Q_2}(u)$;
\item \label{(2)}If $u\in \vec{\alpha}_{Q_2}^*(\eta_2\cup \eta_0)$, then $\rho_{Q_2}(u)\geq \rho_{Q_1}(u)$;
\item \label{(3)}$\vec{\alpha}_{Q_1}^*(\eta_1)\subset \vec{\alpha}_{Q_2}^*(\eta_1)$;
\item \label{(4)}$\mathcal{H}^{n-1}(\vec{\alpha}_{Q_2}^*(\eta_1))>0$ and $\mathcal{H}^{n-1}(\vec{\alpha}_{Q_1}^*(\eta_2))>0$. 
\end{enumerate}
\end{lem}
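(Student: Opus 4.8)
The plan is to prove the four statements in order, exploiting the basic relationship between the radial Gauss map and the support/radial functions, namely that if $v\in\vec\alpha_{Q}^*(\eta)$ then some $v\in\eta$ is an outer unit normal of $Q$ at the boundary point $\rho_Q(u)u$, which means $\rho_Q(u)(u\cdot v)=h_Q(v)$ and, for all $w\in S^{n-1}$, $\rho_Q(u)(u\cdot w)\le h_Q(w)$.

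\textbf{Proof of (a).} Let $u\in\vec\alpha_{Q_1}^*(\eta_1)$, so there is $v\in\eta_1$ with $\rho_{Q_1}(u)(u\cdot v)=h_{Q_1}(v)$. Since $v\in\eta_1$ forces $u\cdot v>0$ (as $h_{Q_1}(v)>h_{Q_2}(v)>0$ and $\rho_{Q_1}(u)>0$), we get $\rho_{Q_1}(u)=h_{Q_1}(v)/(u\cdot v)$. On the other hand $\rho_{Q_2}(u)u\in Q_2$ gives $\rho_{Q_2}(u)(u\cdot v)\le h_{Q_2}(v)$, hence $\rho_{Q_2}(u)\le h_{Q_2}(v)/(u\cdot v)<h_{Q_1}(v)/(u\cdot v)=\rho_{Q_1}(u)$, using $v\in\eta_1$ in the strict inequality.

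\textbf{Proof of (b).} Entirely symmetric: if $u\in\vec\alpha_{Q_2}^*(\eta_2\cup\eta_0)$ pick $v\in\eta_2\cup\eta_0$ with $\rho_{Q_2}(u)(u\cdot v)=h_{Q_2}(v)$; again $u\cdot v>0$, so $\rho_{Q_2}(u)=h_{Q_2}(v)/(u\cdot v)\ge h_{Q_1}(v)/(u\cdot v)\ge\rho_{Q_1}(u)$, the first inequality because $h_{Q_2}(v)\ge h_{Q_1}(v)$ on $\eta_2\cup\eta_0$ and the second because $\rho_{Q_1}(u)u\in Q_1$.

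\textbf{Proof of (c) and (d).} For (c), take $u\in\vec\alpha_{Q_1}^*(\eta_1)$ with witness $v\in\eta_1$, so $\rho_{Q_1}(u)(u\cdot v)=h_{Q_1}(v)$; I want to show $v$ is an outer normal of $Q_2$ at $\rho_{Q_2}(u)u$, i.e.\ $\rho_{Q_2}(u)(u\cdot v)=h_{Q_2}(v)$. By part (a), $\rho_{Q_2}(u)<\rho_{Q_1}(u)$, so the point $\rho_{Q_2}(u)u$ lies strictly inside the segment from $o$ to $\rho_{Q_1}(u)u$; since $\rho_{Q_1}(u)u\in P(Q_1,v)\cap\partial Q_1$ and $Q_2\subset$ the halfspace $\{x\cdot v\le h_{Q_1}(v)\}$... this needs care — the cleanest route is: $\rho_{Q_2}(u)u\in\partial Q_2$ has some outer normal $v'$, so $\rho_{Q_2}(u)=h_{Q_2}(v')/(u\cdot v')$; compare with the candidate $h_{Q_2}(v)/(u\cdot v)$ and use that $\rho_{Q_2}(u)u$ being a boundary point means $h_{Q_2}(v)/(u\cdot v)\ge\rho_{Q_2}(u)$ for the specific $v$, with equality iff $v$ is a normal there — I will pin this down by showing $h_{Q_2}(v)/(u\cdot v)=\rho_{Q_2}(u)$ using $h_{Q_2}(v)<h_{Q_1}(v)=\rho_{Q_1}(u)(u\cdot v)$ together with the fact (to be verified) that $\rho_{Q_2}(u)\ge h_{Q_2}(w)/(u\cdot w)$ cannot hold strictly for all relevant $w$. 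Then (d) follows from (c) and a measure estimate: $\vec\alpha_{Q_2}^*(\eta_1)\supset\vec\alpha_{Q_1}^*(\eta_1)$, and $\vec\alpha_{Q_1}^*(\eta_1)$ has positive $\mathcal H^{n-1}$-measure because $\eta_1$ is a nonempty (relatively) open subset of $S^{n-1}$ — indeed $\eta_1$ is open since $h_{Q_1},h_{Q_2}$ are continuous, and the reverse radial Gauss image of any nonempty open set has positive spherical Lebesgue measure (if it were null, then $\rho_{Q_1}$ would be determined on a full-measure set by normals outside $\eta_1$, forcing $h_{Q_1}\le h_{Q_2}$ on $\eta_1$, a contradiction); the claim for $\vec\alpha_{Q_1}^*(\eta_2)$ is symmetric.

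\textbf{Main obstacle.} The genuinely delicate point is part (c) — verifying that the witnessing normal $v\in\eta_1$ for $Q_1$ at direction $u$ is \emph{also} a normal for $Q_2$ at the same direction $u$, rather than merely an inequality. This requires arguing that shrinking along the ray $u$ from $\partial Q_1$ to $\partial Q_2$ keeps the support relation tight at $v$, which hinges on the strict gap $h_{Q_2}(v)<h_{Q_1}(v)$ and a careful use of convexity of $Q_2$ near $\rho_{Q_2}(u)u$; I expect to need the precise characterization of $\vec\alpha^*$ from Section 2.2 of \cite{HLYZ} (in particular that $u\in\vec\alpha_{Q_2}^*(v)$ iff $h_{Q_2}(v)=\rho_{Q_2}(u)(u\cdot v)$) to make this airtight. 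The positivity of $\mathcal H^{n-1}$ on reverse radial Gauss images of open sets in (d) is standard but should be cited rather than reproved.
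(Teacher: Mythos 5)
Parts (a) and (b) of your proposal are correct and coincide with the paper's argument. But both (c) and (d) contain genuine gaps, and in each case the intermediate claim you plan to verify is actually false. For (c), you try to show that the \emph{same} witness $v\in\eta_1$ (an outer normal of $Q_1$ at $\rho_{Q_1}(u)u$) is also an outer normal of $Q_2$ at $\rho_{Q_2}(u)u$, i.e.\ that $h_{Q_2}(v)=\rho_{Q_2}(u)(u\cdot v)$. This is not true in general: take $Q_1$ a square and $Q_2$ a disk inscribed well inside it, and let $u$ point slightly off a corner-free part of a facet; the $Q_1$-normal at $\rho_{Q_1}(u)u$ is the facet normal $v$, while the only $Q_2$-normal at $\rho_{Q_2}(u)u$ is $u\neq v$, and indeed $h_{Q_2}(v)/(u\cdot v)>\rho_{Q_2}(u)$ strictly. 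Statement (c) only asserts that \emph{some} element of $\eta_1$ is a normal of $Q_2$ at $\rho_{Q_2}(u)u$, and the correct route (the paper's) is purely set-theoretic: since $\eta_1,\eta_2,\eta_0$ partition $S^{n-1}$ and every boundary point has at least one outer normal, $u\notin\vec{\alpha}_{Q_2}^*(\eta_1)$ forces $u\in\vec{\alpha}_{Q_2}^*(\eta_2\cup\eta_0)$, and then (a) gives $\rho_{Q_1}(u)>\rho_{Q_2}(u)$ while (b) gives $\rho_{Q_2}(u)\geq\rho_{Q_1}(u)$, a contradiction. No identification of the normal is needed.

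For (d), your plan rests on the claim that $\mathcal{H}^{n-1}(\vec{\alpha}_{Q_1}^*(\eta_1))>0$, deduced from the assertion that the reverse radial Gauss image of a nonempty open set has positive measure. That assertion is false: for a polytope $Q_1$, an open set $\omega$ of normal directions containing no facet normal has $\vec{\alpha}_{Q_1}^*(\omega)$ equal to finitely many radial directions of vertices (or lower-dimensional faces), a null set. Worse, the specific claim can fail under the lemma's hypotheses: let $Q_1=[-1,1]^2$ and let $Q_2$ be $(1+\epsilon)[-1,1]^2$ with a small corner sliced off by a hyperplane with normal $w=(1,1)/\sqrt2$, so that $\eta_1$ is a small arc around $w$ containing no facet normal of $Q_1$; then $\vec{\alpha}_{Q_1}^*(\eta_1)=\{w\}$ has measure zero, even though $\eta_1,\eta_2,\eta_0$ are all nonempty. (This also refutes your parenthetical inference that a.e.\ normals lying in $\eta_2\cup\eta_0$ would force $h_{Q_1}\leq h_{Q_2}$ on $\eta_1$.) Note that (d) asserts positivity of $\mathcal{H}^{n-1}(\vec{\alpha}_{Q_2}^*(\eta_1))$, the image under the \emph{other} body, and that is what must be attacked directly: if it were zero, then almost every $u$ would lie in $\vec{\alpha}_{Q_2}^*(\eta_2\cup\eta_0)$, so by (b) $\rho_{Q_2}\geq\rho_{Q_1}$ almost everywhere, hence everywhere by continuity, giving $Q_1\subset Q_2$ and contradicting $\eta_1\neq\emptyset$; the statement for $\vec{\alpha}_{Q_1}^*(\eta_2)$ follows by interchanging the roles of $Q_1$ and $Q_2$.
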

\begin{proof}

\begin{enumerate}[(a)]
\item We prove by contradiction. Assume that $\rho_{Q_1}(u)\leq \rho_{Q_2}(u)$. Since $u\in \vec{\alpha}_{Q_1}^*(\eta_1)$, there exists $v_0\in \eta_1$ such that $u\cdot v_0>0$ and $\rho_{Q_1}(u)u\cdot v_0=h_{Q_1}(v_0)$. Hence,
\begin{equation*}
h_{Q_1}(v_0)=\rho_{Q_1}(u)u\cdot v_0\leq \rho_{Q_2}(u)u\cdot v_0\leq h_{Q_2}(v_0).
\end{equation*}
This is a contradiction to the fact that $v_0\in \eta_1$.
\item We again prove by contradiction. Assume $\rho_{Q_1}(u)>\rho_{Q_2}(u)$. Since $u\in \vec{\alpha}_{Q_2}^*(\eta_2\cup \eta_0)$, there exists $v_0\in \eta_2\cup\eta_0$ such that $u\cdot v_0>0$ and $\rho_{Q_2}(u)u\cdot v_0 = h_{Q_2}(v_0)$. Hence,
\begin{equation*}
h_{Q_1}(v_0)\geq \rho_{Q_1}(u)u\cdot v_0>\rho_{Q_2}(u)u\cdot v_0=h_{Q_2}(v_0).
\end{equation*}
This is a contradiction to the fact that $v_0\in \eta_2\cup \eta_0$.
\item Suppose $u\in S^{n-1}$ is such that $u\in \vec{\alpha}_{Q_1}^*(\eta_1)$ but $u\notin \vec{\alpha}_{Q_2}^*(\eta_1)$. Then $u\in \vec{\alpha}_{Q_1}^*(\eta_1)\cap \vec{\alpha}_{Q_2}^*(\eta_2\cup \eta_0)$. Then \eqref{(1)} and \eqref{(2)} provide a contradiction.
\item By symmetry, we only need to show $\mathcal{H}^{n-1}(\vec{\alpha}_{Q_2}^*(\eta_1))>0$. Suppose $$\mathcal{H}^{n-1}(\vec{\alpha}_{Q_2}^*(\eta_1))=0.$$
Then by \eqref{(2)},
\begin{equation}
\label{eq local 02}
\rho_{Q_2}(u)\geq \rho_{Q_1}(u),
\end{equation}
for $\mathcal{H}^{n-1}$ almost all $u\in S^{n-1}$. By continuity of radial functions, \eqref{eq local 02} is valid for all $u\in S^{n-1}$. This implies $Q_1\subset Q_2$, which is a contradiction to $\eta_1$ being non-empty.\qed
\end{enumerate}
\end{proof}

The following theorem establishes the uniqueness for the solution to the dual Minkowski problem for negative $q's$.

\begin{thm}
Assume $q<0$ and $K,L\in \mathcal{K}_o^n$. If $\widetilde{C}_q(K,\cdot)=\widetilde{C}_q(L,\cdot)$, then $K=L$.
\end{thm}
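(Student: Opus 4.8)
The plan is to argue by contradiction: assume $K\neq L$ while $\widetilde{C}_q(K,\cdot)=\widetilde{C}_q(L,\cdot)$, and derive a contradiction from Lemma~\ref{lemma uniqueness}, the integral representation \eqref{eq curvature measure integral representation}, and the sign of $q$. At the outset I would record the total-mass identity $\widetilde{V}_q(K)=\widetilde{C}_q(K,S^{n-1})=\widetilde{C}_q(L,S^{n-1})=\widetilde{V}_q(L)$, which is what disposes of the degenerate configuration.

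First I would treat the case in which one body contains the other, say $L\subseteq K$ with $L\neq K$. Then $\rho_L\leq\rho_K$ on $S^{n-1}$, and by continuity of the radial functions $\rho_L<\rho_K$ on a nonempty, hence positive-measure, open subset. Since $q<0$ this gives $\rho_K^q\leq\rho_L^q$ everywhere with strict inequality on a set of positive measure, so $\widetilde{V}_q(K)<\widetilde{V}_q(L)$, contradicting the total-mass identity. (This is exactly the configuration in which Lemma~\ref{lemma uniqueness} does not literally apply, since there $\eta_2=\emptyset$.)

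Then I would handle the remaining case, in which neither body contains the other. Writing $Q_1=K$ and $Q_2=L$ in the notation of Lemma~\ref{lemma uniqueness}, both $\eta_1=\{h_K>h_L\}$ and $\eta_2=\{h_K<h_L\}$ are nonempty; since $h_K-h_L$ is continuous on the connected sphere $S^{n-1}$ and changes sign, $\eta_0$ is nonempty too, so Lemma~\ref{lemma uniqueness} is available. By \eqref{eq curvature measure integral representation}, $n\widetilde{C}_q(K,\eta_1)=\int_{\vec{\alpha}_K^*(\eta_1)}\rho_K^q(u)\,du$ and $n\widetilde{C}_q(L,\eta_1)=\int_{\vec{\alpha}_L^*(\eta_1)}\rho_L^q(u)\,du$, with $\vec{\alpha}_K^*(\eta_1)\subseteq\vec{\alpha}_L^*(\eta_1)$ by part~\eqref{(3)} and $\mathcal{H}^{n-1}(\vec{\alpha}_L^*(\eta_1))>0$ by part~\eqref{(4)}. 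If $\vec{\alpha}_K^*(\eta_1)$ is $\mathcal{H}^{n-1}$-null, then $\widetilde{C}_q(K,\eta_1)=0<\widetilde{C}_q(L,\eta_1)$. Otherwise part~\eqref{(1)} gives $\rho_K>\rho_L$, hence $\rho_K^q<\rho_L^q$, on $\vec{\alpha}_K^*(\eta_1)$, so $\int_{\vec{\alpha}_K^*(\eta_1)}\rho_K^q(u)\,du<\int_{\vec{\alpha}_K^*(\eta_1)}\rho_L^q(u)\,du\leq\int_{\vec{\alpha}_L^*(\eta_1)}\rho_L^q(u)\,du$. In either case $\widetilde{C}_q(K,\eta_1)<\widetilde{C}_q(L,\eta_1)$, contradicting $\widetilde{C}_q(K,\cdot)=\widetilde{C}_q(L,\cdot)$; therefore $K=L$.

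I do not expect a genuine obstacle in the theorem itself: all the geometric work has been front-loaded into Lemma~\ref{lemma uniqueness}, whose parts~\eqref{(1)} and~\eqref{(3)} encode the monotonicity and nesting of reverse radial Gauss images and whose part~\eqref{(4)} supplies the non-degeneracy $\mathcal{H}^{n-1}(\vec{\alpha}_L^*(\eta_1))>0$ needed to promote a weak inequality to a strict one. The only mild care points are the bookkeeping that guarantees strictness (the $\mathcal{H}^{n-1}$-null case above) and the fact that Lemma~\ref{lemma uniqueness} as stated presumes $\eta_0,\eta_1,\eta_2$ all nonempty, so the nested configuration must be separated off and dispatched by the one-line total-mass comparison. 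Throughout, the hypothesis $q<0$ is indispensable: it is precisely what converts $\rho_K>\rho_L$ into $\rho_K^q<\rho_L^q$, both in the nested case and on $\vec{\alpha}_K^*(\eta_1)$.
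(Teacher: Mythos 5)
Your proposal is correct, and it leans on exactly the same engine as the paper --- Lemma~\ref{lemma uniqueness} combined with the strict decrease of $t\mapsto t^q$ for $q<0$ and the representation \eqref{eq curvature measure integral representation} --- but the global reduction is organized differently. The paper first uses the $q$-homogeneity of $\widetilde{C}_q$ to reduce the claim to showing that $K$ is a dilate of $L$, then rescales to $K'=\lambda K$ with $\lambda$ chosen so that all three sets $\eta',\eta,\eta_0$ are nonempty, and finally plays the two sides against each other: the comparison on $\eta'$ yields $\lambda^q<1$ while the comparison on $\eta$ yields $\lambda^q>1$, a contradiction. You dispense with the dilation altogether and instead split into the nested case (settled in one line by the total-mass identity $\widetilde{V}_q(K)=\widetilde{C}_q(K,S^{n-1})=\widetilde{C}_q(L,S^{n-1})=\widetilde{V}_q(L)$ together with strict monotonicity of $t^q$ on a set of positive measure) and the crossing case, where $\eta_1,\eta_2\neq\emptyset$ and $\eta_0\neq\emptyset$ by continuity on the connected sphere, so the lemma applies directly with $Q_1=K$, $Q_2=L$ and a single one-sided estimate gives $\widetilde{C}_q(K,\eta_1)<\widetilde{C}_q(L,\eta_1)$ (your separate treatment of the case $\mathcal{H}^{n-1}(\vec{\alpha}_K^*(\eta_1))=0$ plays the role of the paper's step deducing positivity of that measure from $\widetilde{C}_q(L,\eta')>0$, and parts \eqref{(1)}, \eqref{(3)}, \eqref{(4)} are invoked exactly as intended). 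What each approach buys: the paper's rescaling handles all non-dilate configurations uniformly, so no containment case ever arises, at the cost of carrying $\lambda$ and needing both inequalities to trap $\lambda^q$; your version needs the explicit case split (correctly noting that Lemma~\ref{lemma uniqueness} as stated requires all three sets nonempty, which fails when one body contains the other), but in exchange the crossing case uses only one side and no scaling, and the nested case is elementary. Both arguments are complete and rigorous.
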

\begin{proof}
By homogeneity of $\widetilde{C}_q$, it suffices to show that $K$ is a dilate of $L$. Assume not. Then there exists $\lambda>0$ and $K'=\lambda K$ such that 
\begin{equation*}
\begin{aligned}
\eta'&=\{v\in S^{n-1}:h_{K'}(v)>h_L(v)\},\\
\eta &=\{v\in S^{n-1}:h_{K'}(v)<h_L(v)\},\\
\eta_0 &=\{v\in S^{n-1}:h_{K'}(v)=h_L(v)\},
\end{aligned}
\end{equation*}
are non-empty.

Lemma \ref{lemma uniqueness}\eqref{(4)}, together with the definition of $\widetilde{C}_q$, shows that $\widetilde{C}_q(L,\eta')>0$. This, in turn, implies that 
\begin{equation}
\label{eq local 1000}
\widetilde{C}_q(K,\eta')>0.
\end{equation}
This implies that
\begin{equation}
\label{eq local 1001}
\mathcal{H}^{n-1}(\vec{\alpha}_{K'}^*(\eta'))=\mathcal{H}^{n-1}(\vec{\alpha}_K^*(\eta'))>0.
\end{equation}

By Lemma \ref{lemma uniqueness}\eqref{(3)}, Lemma \ref{lemma uniqueness}\eqref{(1)}, the fact that $q$ is negative, \eqref{eq local 1001}, and the homogeneity of $\widetilde{C}_q$, we have
\begin{equation}
\label{eq local 03}
\begin{aligned}
\widetilde{C}_q(K,\eta')&=\widetilde{C}_q(L,\eta')\\
&=\frac{1}{n}\int_{\vec{\alpha}_L^*(\eta')}\rho_L^q(u)du\\
&\geq \frac{1}{n}\int_{\vec{\alpha}_{K'}^*(\eta')}\rho_L^q(u)du\\
&>\frac{1}{n}\int_{\vec{\alpha}_{K'}^*(\eta')}\rho_{K'}^q(u)du\\
&=\widetilde{C}_q(K',\eta')\\
&=\lambda^q\widetilde{C}_q(K,\eta').
\end{aligned}
\end{equation}

Hence \eqref{eq local 1000} and \eqref{eq local 03} imply that 
\begin{equation}
\label{eq local 05}
\lambda^q<1.
\end{equation}

Similarly, Lemma \ref{lemma uniqueness}\eqref{(4)}, together with the definition of $\widetilde{C}_q$, shows that $$\widetilde{C}_q(K',\eta)>0.$$
This, in turn, implies that 
\begin{equation}
\label{eq local 1002}
\widetilde{C}_q(K,\eta)>0.
\end{equation}
Thus $\widetilde{C}_q(L,\eta)>0$, which implies that
\begin{equation}
\label{eq local 1003}
\mathcal{H}^{n-1}(\vec{\alpha}_L^*(\eta))>0.
\end{equation}

By Lemma \ref{lemma uniqueness}\eqref{(1)}, the fact that $q$ is negative, \eqref{eq local 1003}, Lemma \ref{lemma uniqueness}\eqref{(3)}, and the homogeneity of $\widetilde{C}_q$,
\begin{equation}
\label{eq local 04}
\begin{aligned}
\widetilde{C}_q(K,\eta)&=\widetilde{C}_q(L,\eta)\\
&=\frac{1}{n}\int_{\vec{\alpha}_L^*(\eta)}\rho_L^q(u)du\\
&<\frac{1}{n}\int_{\vec{\alpha}_L^*(\eta)}\rho_{K'}^{q}(u)du\\
&\leq \frac{1}{n}\int_{\vec{\alpha}_{K'}^*(\eta)}\rho_{K'}^q(u)du\\
&=\widetilde{C}_q(K',\eta)\\
&=\lambda^q\widetilde{C}_q(K,\eta).
\end{aligned}
\end{equation}

Hence \eqref{eq local 1002} and \eqref{eq local 04} imply that
\begin{equation}
\label{eq local 06}
\lambda^q>1.
\end{equation}

There is a contradiction between \eqref{eq local 05} and \eqref{eq local 06}.\qed
\end{proof}

%\begin{acknowledgements}
%If you'd like to thank anyone, place your comments here
%and remove the percent signs.
%\end{acknowledgements}

\end{document}